\numberwithin{equation}{section}
\newtheorem{Theorem}{Theorem}[section]
\newtheorem*{Theorem*}{Theorem}
\newtheorem{Corollary}[Theorem]{Corollary}
\newtheorem{Proposition}[Theorem]{Proposition}
\newtheorem*{Proposition*}{Proposition}
 { \theoremstyle{definition}
\newtheorem{Example}[Theorem]{Example}
\newtheorem{Remark}[Theorem]{Remark} }
\newcommand{\skal}[2]{\langle #1,#2\rangle}
\begin{document}


\newcommand{\arXivNumber}{1503.03740}

\renewcommand{\PaperNumber}{107}

\FirstPageHeading

\ShortArticleName{Geometry of $G$-Structures via the Intrinsic Torsion}

\ArticleName{Geometry of $\boldsymbol{G}$-Structures via the Intrinsic Torsion}

\Author{Kamil NIEDZIA{\L}OMSKI}

\AuthorNameForHeading{K.~Niedzia{\l}omski}

\Address{Department of Mathematics and Computer Science, University of \L\'{o}d\'{z},\\
 ul.~Banacha 22, 90-238 \L\'{o}d\'{z}, Poland}
\Email{\href{mailto:kamiln@math.uni.lodz.pl}{kamiln@math.uni.lodz.pl}}
\URLaddress{\url{http://www.math.uni.lodz.pl/~kamiln/}}

\ArticleDates{Received April 28, 2016, in f\/inal form October 31, 2016; Published online November 04, 2016}

\Abstract{We study the geometry of a $G$-structure $P$ inside the oriented orthonormal frame bundle ${\rm SO}(M)$ over an oriented Riemannian
manifold $M$. We assume that $G$ is connected and closed, so the quotient ${\rm SO}(n)/G$, where $n=\dim M$, is a normal homogeneous space and we equip ${\rm SO}(M)$ with the natural Riemannian structure induced from the structure on~$M$ and the Killing form of~${\rm SO}(n)$. We show, in particular, that minimality of $P$ is equivalent to harmonicity of an induced section of the homogeneous bundle ${\rm SO}(M)\times_{{\rm SO}(n)}{\rm SO}(n)/G$, with a Riemannian metric on~$M$ obtained as the pull-back with respect to this section of the Riemannian metric on the considered associated bundle, and to the minimality of the image of this section. We apply obtained results to the case of almost product structures, i.e., structures induced by plane f\/ields.}

\Keywords{$G$-structure; intrinsic torsion; minimal submanifold; harmonic mapping}

\Classification{53C10; 53C24; 53C43; 53C15}

\section{Introduction}

Existence of a geometric structure on an oriented Riemannian manifold is equivalent to saying that the structure group ${\rm SO}(n)$ of the oriented orthonormal frame bundle reduces to a certain subgroup $G$. For example, for $G$ equal
\begin{gather*}
{\rm SO}(m)\times {\rm SO}(n-m),\quad {\rm U}(n/2),\quad {\rm U}(n/2)\times 1,\quad {\rm Sp}(n/4){\rm Sp}(1),\quad {\rm G}_2,\quad {\rm Spin}(7)
\end{gather*}
we have almost product, almost Hermitian, almost contact, almost quaternion-K\"ahler, ${\rm G}_2$ and ${\rm Spin}(7)$ structures, respectively. It is natural to ask if the holonomy group of the Levi-Civita connection $\nabla$ is contained in given $G$. The list of possible irreducible Riemannian holonomies is limited by the Berger list~\cite{lb}.

On the other hand, the defect of the Levi-Civita connection to be a $G$-connection measures the intrinsic torsion $\xi$, which is the dif\/ference of $\nabla$ and a $G$-connection $\nabla^G$ (with torsion),
\begin{gather*}
\xi_XY=\nabla_XY-\nabla^G_XY.
\end{gather*}
If a $G$-structure is integrable, i.e., the intrinsic torsion vanishes, then the holonomy is contained in the structure group $G$. The study of possible intrinsic torsions, i.e., the decomposition of the space of intrinsic torsions into irreducible modules, was initiated by Gray and Hervella \cite{gh} in the case of almost Hermitian manifolds. Later, many authors considered other possible cases (see, for example, \cite{cg,chs,cs,dm,ffs,fg,mc,ms,an,vw}).

The other possible direction, initiated by Wood \cite{cw3} and generalized to the general case by Gonz{\'a}lez-D{\'a}vila and Mart{\'{\i}}n~Cabrera~\cite{gmc2}, is to consider dif\/ferential properties of intrinsic torsion induced by condition of harmonicity of the unique section of the associated homogeneous bundle. More precisely, a $G$-structure $P\subset {\rm SO}(M)$, where $G$ is closed and connected, induces the unique section $\sigma$ of the homogeneous associated bundle ${\rm SO}(M)/G={\rm SO}(M)\times_{{\rm SO}(n)} {\rm SO}(n)/G$,
\begin{gather*}
\sigma(\pi_{{\rm SO}(M)}(p))=[[p,eG]],\qquad p\in {\rm SO}(M),
\end{gather*}
where $\pi_{{\rm SO}(M)}$ is the projection in the orthonormal frame bundle ${\rm SO}(M)$. Then the decomposition $\mathfrak{so}(n)=\mathfrak{g}\oplus\mathfrak{m}$, where $\mathfrak{m}$ is the orthogonal complement to $\mathfrak{g}$ with respect to the Killing form, on the level of Lie algebras is reductive. Equip ${\rm SO}(M)/G$ with the natural Riemannian metric induced from the Killing form on $\mathfrak{m}$ and Riemannian metric $g$ on $M$. Then we say that a $G$-structure $P$ is harmonic if the section $\sigma$ is harmonic. The correspondence of the notion of harmonicity with the intrinsic torsion follows from the fact that the intrinsic torsion $\xi$ can be considered as a section of the bundle $T^{\ast}M\otimes\mathfrak{m}_P$, where $\mathfrak{m}_P$ is the adjoint bundle $P\times_{\operatorname{ad}G}\mathfrak{m}$. This follows from the observation that the $\mathfrak{m}$-component of the connection form $\omega$ with respect to the reductive decomposition $\mathfrak{so}(n)=\mathfrak{g}\oplus\mathfrak{m}$ can be projected to the tangent bundle $TM$ (see the next section for the details).

To seek for the `best' possible non-integrable $G$-structures we consider the third possible approach. Namely, we focus on the minimality of a $G$-structure in the oriented orthonormal frame bundle ${\rm SO}(M)$. It is not surprising that minimality is related with the harmonicity of a~$G$-structure. More generally, we use the concept of intrinsic torsion to obtain some results on the geometry of~$G$-structures. The idea comes from the results obtained by the author~\cite{kn} in the case of a single submanifold. We deal with the intrinsic and extrinsic geometry of a~$G$-structure. To be more precise, we def\/ine the Riemannian metric on ${\rm SO}(M)$ by inducing it from the Riemannian metric~$g$ on~$M$ and Killing form ${\bf B}$ on the structure group ${\rm SO}(n)$. It is interesting that the Levi-Civita connection on~$P$ depends on the $G$-connection and Levi-Civita connection $\tilde{\nabla}$, which comes from a certain Riemannian metric $\tilde{g}$ on~$M$. This metric $\tilde{g}$ depends on the intrinsic torsion and equals the pull-back of the Riemannian metric on the associated homogeneous bundle ${\rm SO}(M)/G$ with respect to the section $\sigma$.

The main emphasis is put on the minimality of a $G$-structure $P$ in the Riemannian struc\-tu\-re~${\rm SO}(M)$. The main theorem can be stated as follows.
\begin{Theorem*}
The following conditions are equivalent:
\begin{enumerate}\itemsep=0pt
\item[$1)$] $G$-structure $P$ is minimal in ${\rm SO}(M)$,
\item[$2)$] the induced section $\sigma\colon (M,\tilde{g})\to (N,\skal{\cdot}{\cdot})$ is a harmonic map, where $\tilde{g}$ is such that $\tilde{g}=\sigma^{\ast}\skal{\cdot}{\cdot}$.
\end{enumerate}
\end{Theorem*}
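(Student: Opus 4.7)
My plan is to realise $P$ as the full preimage in a Riemannian submersion with totally geodesic fibres, and then invoke the classical identification between harmonicity of an isometric immersion and minimality of its image. Concretely, if $\pi\colon {\rm SO}(M)\to N={\rm SO}(M)/G$ is the canonical projection, then $P=\pi^{-1}(\sigma(M))$, since $P$ is a principal $G$-subbundle and $\sigma(\pi_{{\rm SO}(M)}(p))=[[p,eG]]$ for $p\in P$. The whole argument reduces to transferring minimality/harmonicity across $\pi$ and across $\sigma$.

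The first structural step is to verify that, with the metric on ${\rm SO}(M)$ built from $g$ and the Killing form ${\bf B}$ and the metric on $N$ induced by the reductive decomposition $\mathfrak{so}(n)=\mathfrak{g}\oplus\mathfrak{m}$, the projection $\pi$ is a Riemannian submersion with totally geodesic fibres. The tangent space $T_p {\rm SO}(M)$ splits orthogonally as a $\mathfrak{g}$-vertical part (the kernel of $\pi_{\ast}$), an $\mathfrak{m}$-vertical part and the Levi-Civita horizontal part, and $\pi_{\ast}$ is an isometry on the sum of the latter two onto $T_{\pi(p)}N$. The fibres of $\pi$ are $G$-orbits and are totally geodesic in ${\rm SO}(M)$: the whole ${\rm SO}(n)$-fibres of ${\rm SO}(M)\to M$ are totally geodesic by the standard observation that the horizontal distribution coming from the Levi-Civita connection combined with a bi-invariant fibre metric produces totally geodesic fibres, and within each such fibre the coset of the closed subgroup $G$ is totally geodesic in ${\rm SO}(n)$ equipped with the Killing metric.

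Granted these two facts, the standard formula for the second fundamental form of a $\pi$-preimage yields that the mean curvature vector of $P$ at $p$ is the horizontal lift of the mean curvature vector of $\sigma(M)$ at $\pi(p)$; hence $P$ is minimal in ${\rm SO}(M)$ if and only if $\sigma(M)$ is minimal in $N$. For the second equivalence, note that $\sigma\colon M\to N$ is an embedding and $\tilde g=\sigma^{\ast}\skal{\cdot}{\cdot}$ is by definition the induced metric, so $\sigma\colon (M,\tilde g)\to (N,\skal{\cdot}{\cdot})$ is an isometric immersion. For any isometric immersion the tension field equals $(\dim M)$ times the mean curvature vector, therefore $\sigma$ is harmonic exactly when $\sigma(M)$ is minimal in $N$. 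Concatenating the two equivalences proves the theorem.

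The point I expect to be the main obstacle is the first structural step: one has to disentangle carefully the two vertical distributions on ${\rm SO}(M)$, namely that of $\pi_{{\rm SO}(M)}\colon {\rm SO}(M)\to M$ and that of $\pi\colon {\rm SO}(M)\to N$, and check that the metric assembled from $g$ and ${\bf B}$ is exactly the one making $\pi$ a Riemannian submersion onto the normal-homogeneous-fibre metric on $N$. Once this is rigorous, the remaining two ingredients, namely the tube formula for Riemannian submersions with totally geodesic fibres and the standard identity between the tension field of an isometric immersion and its mean curvature, are classical and deliver the conclusion without further computation.
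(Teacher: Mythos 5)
Your argument is correct, but it takes a genuinely different route from the paper. The paper proves Theorem~\ref{thm:minimalityofP} by brute comparison of explicit formulas: it computes the second fundamental form of $P$ in ${\rm SO}(M)$ (Theorem~\ref{thm:secfundformP}) to reduce minimality of $P$ to the single equation \eqref{eq:minimalityP}, computes the tension field of $\sigma$ (Proposition~\ref{prop:harmonicsigma1}) to reduce harmonicity to the pair \eqref{eq:H1}--\eqref{eq:H2}, and then shows by a curvature computation using \eqref{eq:differencetensor}, \eqref{eq:curvaturedecomp} and normality of ${\rm SO}(n)/G$ that the single equation implies the pair. You instead factor the statement through the intermediate condition that $\sigma(M)$ is minimal in $N$: you use $P=\zeta^{-1}(\sigma(M))$, the fact (recorded in Section~\ref{section4}) that $\zeta\colon{\rm SO}(M)\to N$ is a Riemannian submersion, total geodesy of its fibres (which indeed follows from $\nabla^{{\rm SO}(M)}_{\alpha^{\ast}}\beta^{\ast}=-\tfrac12[\alpha,\beta]^{\ast}$ and $[\mathfrak{g},\mathfrak{g}]\subset\mathfrak{g}$, exactly the observation made at the end of the proof of Theorem~\ref{thm:secfundformP}), the trace of O'Neill's formulas for preimages under such submersions, and the Eells--Sampson identity $\tau(\sigma)=(\dim M)H$ for the isometric immersion $\sigma\colon(M,\sigma^{\ast}\skal{\cdot}{\cdot})\to(N,\skal{\cdot}{\cdot})$; one does need to check, as you flag, that $\tilde g=\sigma^{\ast}\skal{\cdot}{\cdot}$ coincides with $g(\cdot,L\cdot)$, which follows from \eqref{eq:verticalsigma} and \eqref{eq:riemmetriconN}. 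Your route is shorter and more conceptual, and it delivers Remark~\ref{rem:H1impliesH2}(1) (equivalence with minimality of $\sigma(M)$) for free rather than as an afterthought; what it does not produce is the explicit intrinsic-torsion equation \eqref{eq:minimalityP} for minimality, which is the computational payoff of the paper's approach and is what feeds Corollary~\ref{cor:totgeodP} and the examples of Section~\ref{section6}.
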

Notice, that these conditions are also equivalent, by the general fact concerning harmonic maps \cite{el}, to the minimality of $\sigma(M)$ in $N$. Moreover, the vanishing of the second fundamental form is the neccesary condition for the integrability of a $G$-structure.
\begin{Proposition*}
If a $G$-structure $P$ is integrable, i.e., the intrinsic torsion vanishes, then $P$ is totally geodesic in ${\rm SO}(M)$.
\end{Proposition*}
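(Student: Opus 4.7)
The plan is to show that the second fundamental form of $P$ in $({\rm SO}(M),\tilde{g}_{{\rm SO}(M)})$ vanishes identically, which is equivalent to $P$ being totally geodesic.

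\textbf{Identification of $TP$ and $NP$.}
Integrability of $P$ means that $\nabla$ itself is a $G$-connection, so its holonomy is contained in $G$ and, by Ambrose--Singer, the curvature $R$ of $\nabla$ is $\mathfrak{g}$-valued along $P$. The Levi--Civita horizontal distribution therefore preserves $P$: horizontal lifts of vector fields on $M$ taken at points of $P$ are tangent to $P$. Combining this with the $\mathfrak{g}$-vertical directions gives, at every $p\in P$, an orthogonal decomposition $T_pP = H^{\mathrm{LC}}_p \oplus \mathfrak{g}^{\ast}_p$ with $N_pP = \mathfrak{m}^{\ast}_p$, where orthogonality follows from the horizontal/vertical splitting induced by the Sasaki-type metric on ${\rm SO}(M)$ together with the ${\bf B}$-orthogonality of $\mathfrak{g}$ and $\mathfrak{m}$.

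\textbf{Reduction to three cases.}
Since the second fundamental form is tensorial, it suffices to evaluate it on basic generators of $TP$, namely on pairs built from Levi--Civita horizontal lifts $X^{\ast},Y^{\ast}$ of vector fields on $M$ and fundamental vector fields $A^{\ast},B^{\ast}$ for $A,B\in\mathfrak{g}$. This produces three cases: $(X^{\ast},Y^{\ast})$, $(X^{\ast},A^{\ast})$, and $(A^{\ast},B^{\ast})$.

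\textbf{Case analysis.}
Using the formulas for the Levi--Civita connection $\tilde\nabla$ on ${\rm SO}(M)$ derived earlier in the paper, the horizontal--horizontal case yields $\tilde\nabla_{X^{\ast}}Y^{\ast}=(\nabla_XY)^{\ast}$ plus a vertical term proportional to $R(X,Y)$ acting on the frame; by Ambrose--Singer this vertical term lies in $\mathfrak{g}^{\ast}$, hence is tangent to $P$. In the vertical--vertical case the induced fiber metric is the bi-invariant Killing metric on ${\rm SO}(n)$, giving $\tilde\nabla_{A^{\ast}}B^{\ast}=\tfrac{1}{2}[A,B]^{\ast}$, which remains in $\mathfrak{g}^{\ast}$ because $\mathfrak{g}$ is a Lie subalgebra. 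The mixed case $(X^{\ast},A^{\ast})$ produces a horizontal piece, automatically contained in $H^{\mathrm{LC}}_p\subset T_pP$, plus a vertical piece whose $\mathfrak{m}$-component must be shown to vanish.

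\textbf{Main obstacle.}
The mixed case is the technical heart of the argument. One must verify that the vertical part of $\tilde\nabla_{X^{\ast}}A^{\ast}$ stays inside $\mathfrak{g}^{\ast}$. This relies on two inputs: the reductivity $[\mathfrak{g},\mathfrak{g}]\subset\mathfrak{g}$, which takes care of commutator terms produced by the Koszul formula, and the fact that $R$ is $\mathfrak{g}$-valued along $P$, which handles the curvature terms arising from $[X^{\ast},A^{\ast}]$. With the explicit connection formula from the preceding section in hand, each term is seen to be $\mathfrak{g}$-valued, so the normal projection of $\tilde\nabla_{X^{\ast}}A^{\ast}$ is zero. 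Combining the three cases gives $II\equiv 0$, proving that $P$ is totally geodesic in ${\rm SO}(M)$.
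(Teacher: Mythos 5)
Your proposal is correct and takes essentially the same route as the paper: the paper's own argument is to compute the second fundamental form $\Pi^P$ case by case (horizontal--horizontal, mixed, vertical--vertical) from the explicit formulas for $\nabla^{{\rm SO}(M)}$ (Theorem~\ref{thm:secfundformP}), after which the proposition is immediate because every term of $\Pi^P$ contains $\xi$. The only cosmetic differences are that you justify the $\mathfrak{g}$-valuedness of $R$ via holonomy and Ambrose--Singer where the paper reads it off from the identities~\eqref{eq:curvaturedecomp} (with $\xi=0$ they give $R=R'$), and that in the mixed case the vertical term to control is $(\nabla_X\alpha)_{\mathfrak{m}}=[\xi_X,\alpha]_{\mathfrak{m}}$, which vanishes because $\nabla=\nabla'$ preserves $\mathfrak{g}_P$ when $\xi=0$ --- not because of any curvature contribution from $[X^h,\alpha^{\ast}]$, which has none.
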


The article is organized as follows. In Sections~\ref{section2} and~\ref{section3} we recall the notion of the intrinsic torsion and state its main properties. The results in these sections are well known and can be found in the literature, for example, in~\cite{gmc2,cw2}.

In Section~\ref{section4} we introduce a tensor, which transfers the Riemannian metric $g$ to the mentioned above $\tilde{g}$. Its properties are crucial in the main considerations. With the Riemannian metric~$\tilde{g}$ and the horizontal distribution $\mathcal{H}'$ of the $G$-structure $P$ induced by the minimal $G$-connec\-tion~$\nabla '$, the projection $\pi_P\colon P\to M$ becomes the Riemannian submersion.

Section~\ref{section5} is the main section of the general outline and deals with the geometry of $G$-structures. Firstly, we consider intrinsic geometry focusing on the curvatures and, secondly, we consider extrinsic geometry, with the main result concerning minimality of a $G$-structure.

We end the article with some relevant examples. Examples considered here have been considered by other authors \cite{ggv,gd,hl}, so we only list them and state relevant conditions.

Throughout the paper we will use the following notation and identif\/ication: For any associated bundle $E=P\times_G S$ with the f\/iber $S$ and induced by the principal bundle $P(M,G)$ any element in $E$ will be denoted by $[[p,s]]$. Moreover, we have $\Gamma(E)\equiv C^{\infty}(P,S)^G$, where $C^{\infty}(P,S)^G$ is the space of equivariant functions $f\colon P\to S$, $f(pg)=g^{-1}f(p)$. The identif\/ication is the following
\begin{gather*}
\Gamma(E)\ni\sigma\longleftrightarrow f\in C^{\infty}(P,S)^G,\qquad \sigma(\pi_P(p))=[[p,f(p)]].
\end{gather*}

\section{Intrinsic torsion}\label{section2}

In this section we will review the basic facts concerning intrinsic torsion of a $G$-structure \cite{gmc2,cw2}.

Let $(M,g)$ be an oriented Riemannian manifold, ${\rm SO}(M)$ its oriented orthonormal frame bundle. Denote by $\omega$ the connection form induced by the Levi-Civita connection $\nabla$ on $M$. Let~$\mathcal{H}$ and~$\mathcal{V}$ be horizontal and vertical distributions on ${\rm SO}(M)$, respectively.

Assume that the structure group ${\rm SO}(n)$ reduces to a closed and connected subgroup $G$. Then the quotient~${\rm SO}(n)/G$ is a normal homogeneous space, i.e., the subspace $\mathfrak{m}=\mathfrak{g}^{\bot}\subset \mathfrak{so}(n)$ def\/ines~$\operatorname{ad}(G)$ invariant decomposition $\mathfrak{so}(n)=\mathfrak{g}\oplus \mathfrak{m}$, where $\mathfrak{g}$ is the Lie algebra of $G$ and the orthogonal part is taken with respect to the Killing form ${\bf B}$. Denote by $P\subset {\rm SO}(M)$ the reduced subbundle. The $\mathfrak{g}$-component $\omega_{\mathfrak{g}}$ of $\omega$ def\/ines a connection form on $P$. Denote by $\mathcal{H}'$ and $\mathcal{V}'$ the horizontal and vertical distributions on $P$ with respect to $\omega_{\mathfrak{g}}$, respectively.

Moreover, consider the following associated bundles -- the adjoint bundles over $M$: $\mathfrak{so}(n)_P=P\times_{\operatorname{ad}G}\mathfrak{so}(n)$, $\mathfrak{g}_P=P\times_{\operatorname{ad}G}\mathfrak{g}$ and $\mathfrak{m}_P=P\times_{\operatorname{ad}G}\mathfrak{m}$.

Notice that $\mathfrak{so}(n)_P$ can be realized as the bundle $\mathfrak{so}(M)$ of skew-symmetric endomorphisms of the tangent bundle $TM$ via the identif\/ication
\begin{gather*}
[[p,A]]\mapsto p^{-1}\cdot A\cdot p,
\end{gather*}
where we treat element $p\in P_x$ as a linear isomorphism $p\colon \mathbb{R}^n\to T_xM$. Analogously, we have the bundles $\mathfrak{g}(M)$ and $\mathfrak{m}(M)$. Moreover, $\mathfrak{so}(n)_P$ is isomorphic to the bundle $\mathcal{V}_{\rm equiv}$ over $M$ of equivariant vertical vector f\/ields on $P$. Namely, the map
\begin{gather*}
[[p,A]]\mapsto A^{\ast}_p
\end{gather*}
settles this isomorphism, where $A^{\ast}$ denotes the fundamental vertical vector induced by the matrix $A\in\mathfrak{so}(n)$.

Let
\begin{gather*}
\xi(X)=\omega_{\mathfrak{m}}\big(X^{h'}\big),
\end{gather*}
where $\omega_{\mathfrak{m}}$ denotes the $\mathfrak{m}$-component of $\omega$ and $X^{h'}$ is the horizontal lift with respect to $\mathcal{H}'$. Since
\begin{gather*}
\omega_{\mathfrak{m}}\big(X^{h'}_{pg}\big)=\omega_{\mathfrak{m}}\big(R_{g\ast}X^{h'}_p\big)
=\operatorname{ad}\big(g^{-1}\big)\omega_{\mathfrak{m}}\big(X^{h'}_p\big),
\end{gather*}
it follows that $\xi(X)\in \mathfrak{m}_P$. Section $\xi\in\Gamma(T^{\ast}M\otimes\mathfrak{m}_P)$ is called the {\it intrinsic torsion} of a $G$-structure $P$. By above identif\/ications $\xi(X)=\xi_X\in \mathfrak{m}(M)$. Let $\nabla '$ be the connection on $M$ induced by $\omega_{\mathfrak{g}}$ on $P$. We call $\nabla '$ the {\it minimal connection} of a $G$-structure. Intrinsic torsion $\xi$ has the following properties:
\begin{gather}\label{eq:intrinsictorsionproperties}
X^{h'}=X^h+(\xi_X)^{\ast}\qquad\textrm{and}\qquad \xi_XY=\nabla_XY-\nabla'_XY,
\end{gather}
for $X,Y\in \Gamma(TM)$. The decomposition $\mathfrak{so}(M)=\mathfrak{g}(M)\oplus\mathfrak{m}(M)$ implies the following relations between intrinsic torsion and the curvature tensor $R$ of $\nabla$:
\begin{gather}\label{eq:curvaturedecomp}
\begin{split}
&(\nabla_X\xi_Y)_{\mathfrak{g}}=[\xi_X,\xi_Y]_{\mathfrak{g}},\\
&(\nabla_X\xi_Y)_{\mathfrak{m}}=\nabla '_X\xi_Y+[\xi_X,\xi_Y]_{\mathfrak{m}},\\
&R(X,Y)_{\mathfrak{g}}=R'(X,Y)+[\xi_X,\xi_Y]_{\mathfrak{g}},\\
&R(X,Y)_{\mathfrak{m}}=\nabla '_X\xi_Y-\nabla '_Y\xi_X+[\xi_X,\xi_Y]_{\mathfrak{m}}-\xi_{[X,Y]}.
\end{split}
\end{gather}
Notice also that
\begin{gather}\label{eq:Rdecomp2}
R(X,Y)=R'(X,Y)+(\nabla_X\xi)_Y-(\nabla_Y\xi)_X-[\xi_X,\xi_Y],
\end{gather}
where $R'$ is the curvature tensor of $\nabla'$ and
\begin{gather*}
(\nabla_X\xi)_Y=\nabla_X\xi_Y-\xi_{\nabla_XY}.
\end{gather*}

\section[ Harmonic $G$-reductions]{Harmonic $\boldsymbol{G}$-reductions}\label{section3}

Let $(M,g)$ be an oriented Riemannian manifold, $\pi_{{\rm SO}(M)}\colon {\rm SO}(M)\to M$ its orthonormal frame bundle with the connection form $\omega$ inducing Levi-Civita connection $\nabla$ on $M$. Let $G\subset {\rm SO}(n)$, $n=\dim M$, be a subgroup such that ${\rm SO}(n)/G$ is a normal homogeneous space and let $P\subset {\rm SO}(M)$ the reduced subbundle. Let $N={\rm SO}(M)/G={\rm SO}(M)\times_{{\rm SO}(n)} ({\rm SO}(n)/G)$ be the homogeneous bundle associated with ${\rm SO}(M)$. Denote by $\zeta\colon {\rm SO}(M)\to N$ the natural projection. Then $\zeta$ def\/ines the $G$-principal bundle. Clearly $\zeta$ is constant on $P$ and hence there is a bijection between $G$-reductions of ${\rm SO}(M)$ and sections of $N$. Denote by $\sigma\in \Gamma(N)$ the section induced by~$P$. Let $\mathfrak{m}_N$ be the adjoint bundle associated with $\zeta$, i.e., $\mathfrak{m}_N={\rm SO}(M)\times_{\operatorname{ad}G}\mathfrak{m}$.

Let $\mathcal{H}^N$ and $\mathcal{V}^N$ be horizontal and vertical distributions on $N$, respectively, where $\mathcal{H}^N$ is induced by $\omega$. We have
\begin{gather*}
\mathcal{H}^N=\zeta_{\ast}\mathcal{H},\qquad \mathcal{V}^N={\rm SO}(M)\times_{{\rm SO}(n)} T({\rm SO}(n)/G),
\end{gather*}
where ${\rm SO}(n)$ acts on $T({\rm SO}(n)/G)$ by the dif\/ferential of the natural action of ${\rm SO}(n)$ on ${\rm SO}(n)/G$. Since $T({\rm SO}(n)/G)={\rm SO}(n)\times_{\operatorname{ad}G}\mathfrak{m}$ it follows that $\mathcal{V}^N$ is isomorphic to $\mathfrak{m}_N$.

Denote by $\varphi\colon TN\to \mathfrak{m}_N$ the following map
\begin{gather*}
\varphi([[p,[[g,A]]\,]])=[[p,A]],\qquad \varphi\big(X^{h,N}\big)=0,
\end{gather*}
i.e., $\varphi$ settles the described above isomorphism of $\mathcal{V}^N$ onto $\mathfrak{m}_N$ and is zero on the horizontal distribution. The Riemannian metric on $N$ is induced by $g$ and the Killing form on~$\mathfrak{m}$, namely,
\begin{gather}\label{eq:riemmetriconN}
\skal{V}{W}=g(\pi_{N\ast}V,\pi_{N\ast}W)+{\bf B}(\varphi(V),\varphi(W)),\qquad V,W\in TN,
\end{gather}
where ${\bf B}(A,B)=-\operatorname{tr}(AB)$ for $A,B\in\mathfrak{m}$. Decomposition
\begin{gather*}
TN=\mathcal{H}^N\oplus\mathcal{V}^N,
\end{gather*}
def\/ines projections ${\bf h}\colon TN\to\mathcal{H}^N$ and ${\bf v}\colon TN\to\mathcal{H}^N$. It can be shown that~\cite{gmc2}
\begin{gather}\label{eq:verticalsigma}
{\bf v}\sigma_{\ast}(X)=\xi_X\in\mathfrak{m}_N,\qquad X\in TM,
\end{gather}
where $\xi_X$ is the intrinsic torsion of a $G$-structure $P$.

In this section we will derive the formula for the derivative $\nabla\sigma_{\ast}$ and the tension f\/ield of~$\sigma$.

Equip $N$ with the Riemannian metric $\skal{\cdot}{\cdot}$ given by \eqref{eq:riemmetriconN} and~$M$ with a Riemannian metric~$\tilde{g}$ on~$M$, which may vary from~$g$. We will consider $\sigma$ as a map
\begin{gather*}
\sigma\colon \ (M,\tilde g)\to (N,\skal{\cdot}{\cdot}).
\end{gather*}
Assuming $M$ is compact we may def\/ine the energy functional $E(\sigma)$ given by
\begin{gather}\label{eq:energyfunctional}
E(\sigma)=\frac{1}{2}\int_M \|\sigma_{\ast}\|^2\,d{\rm vol}_M,
\end{gather}
where the norm $\|\cdot\|$ is taken with respect to $\tilde g$ and $\skal{\cdot}{\cdot}$, i.e.,
\begin{gather*}
\|\sigma_{\ast}\|^2=\sum_i\skal{\sigma_{\ast}(\tilde{e_i})}{\sigma_{\ast}(\tilde{e_i})},
\end{gather*}
where $(\tilde{e_i})$ is a $\tilde{g}$-orthonormal basis on $M$. In order to study harmonic sections it is convenient to study variations of the functional~\eqref{eq:energyfunctional} in the class of all sections of $N$. We will consider that above functional in the class of all maps from $M$ to $N$, thus we will study harmonicity as a map (not a section). Then, the Euler--Lagrange equation is given by the formula
\begin{gather}\label{eq:ELequation}
\tau_{\tilde{g}}(\sigma)=\operatorname{tr}_{\tilde g}\nabla\sigma_{\ast}
=\sum_i\nabla^{\sigma}_{\tilde{e_i}}\sigma_{\ast}\tilde{e_i}
-\sigma_{\ast}(\tilde{\nabla}_{\tilde{e_i}}\tilde{e_i})=0,
\end{gather}
where $\tilde{\nabla}$ is the Levi-Civita connection of $\tilde{g}$ and $\nabla^{\sigma}$ is the pull-back connection in the bundle
\begin{gather}\label{eq:TNdecomposition}
\sigma^{\ast}TN= \sigma^{\ast}\mathcal{H}^N\oplus\sigma^{\ast}\mathcal{V}^N =TM\oplus\mathfrak{m}_P=TM\oplus\mathfrak{m}(M).
\end{gather}
We call $\tau(\sigma)$ the {\it tension field} of $\sigma$. We say that a $G$-structure is {\it harmonic as a map} if~\eqref{eq:ELequation} holds for the induced section $\sigma$ (here and furthermore we do not require $M$ to be compact). Taking the decomposition of $\tau(\sigma)$ with respect to~\eqref{eq:TNdecomposition}, harmonicity of $\sigma$ is equivalent to vanishing of~${\bf h}\tau(\sigma)$ and~${\bf v}\tau(\sigma)$.

If $\tilde g=g$, then a $G$-structure induced by the section $\sigma$ satisfying ${\bf v}\tau(\sigma)=0$ is called a {\it harmonic} $G$-{\it structure}~\cite{gmc2}.

Denote by $\Pi_{\tilde g}$ and $\Pi_g$ the dif\/ferential $\nabla\sigma_{\ast}$ with respect to $\tilde g$ and $g$, respectively. Moreover, let $S$ be the dif\/ference between $\tilde{\nabla}$ and $\nabla$, i.e., $S(X,Y)=\tilde{\nabla}_XY-\nabla_XY$. Then
\begin{gather*}
\tau_{\tilde g}(\sigma)=\operatorname{tr}_{\tilde g}\Pi_{\tilde g}
=\operatorname{tr}_{\tilde g}(\Pi_g-\sigma_{\ast}S).
\end{gather*}
We have \cite{gmc2, cw2}
\begin{gather}
\varphi(\Pi_g(X,Y))=\frac{1}{2}((\nabla_X\xi)_Y+(\nabla_Y\xi)_X)\in\mathfrak{m}(M), \label{eq:vtau}\\
g(\pi_{N\ast}\Pi_g(X,Y),Z) =\frac{1}{2}(
{\bf B}(\xi_X,R_{\mathfrak{m}}(Y,Z))+{\bf B}(\xi_Y,R_{\mathfrak{m}}(X,Z))), \label{eq:htau}
\end{gather}
where $(\nabla_X\xi)_Y=\nabla_X\xi_Y-\xi_{\nabla_XY}$.

In order to describe explicitly the condition for the harmonicity of $\sigma$ let us introduce certain curvature operator~\cite{kn}. For any $\alpha\in\mathfrak{so}(M)$ let
\begin{gather}\label{eq:defRT}
R_{\alpha}(X)=\sum_i R(e_i,\alpha(e_i))X\in TM,\qquad X\in TM.
\end{gather}
Then $R_{\alpha}\in\mathfrak{so}(M)$ and the following formula holds
\begin{gather}\label{eq:R_Tproperty}
g(R_{\alpha}(X),Y)={\bf B}(\alpha,R(X,Y)),\qquad \alpha\in\mathfrak{so}(M),\qquad X,Y\in TM.
\end{gather}
Indeed,
\begin{gather*}
g(R_{\alpha}(X),Y) =\sum_i g(R(e_i,\alpha(e_i))X,Y)=\sum_i g(\alpha(e_i),R(X,Y)e_i) ={\bf B}(\alpha,R(X,Y)).
\end{gather*}

Now, we can state the formula for the harmonicity of $\sigma$.
\begin{Proposition}\label{prop:harmonicsigma1}
$G$-structure $\sigma\colon (M,\tilde{g})\to (N,\skal{\cdot}{\cdot})$ is a harmonic map if and only if the following conditions hold
\begin{gather*}
\sum_i (\nabla_{\tilde{e_i}}\xi)_{\tilde{e_i}}-\xi_{S(\tilde{e_i},\tilde{e_i})}=0 \qquad\textrm{and}\qquad
\sum_iR_{\xi_{\tilde{e_i}}}(\tilde{e_i})-S(\tilde{e_i},\tilde{e_i})=0,
\end{gather*}
where $(\tilde{e_i})$ is an orthonormal basis for $\tilde{g}$.
\end{Proposition}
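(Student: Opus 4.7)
The plan is to split the tension field $\tau_{\tilde g}(\sigma)$ along the decomposition $TN=\mathcal{H}^N\oplus\mathcal{V}^N$, and to convert the vanishing of each component separately into the two scalar equations stated in the proposition. Harmonicity of $\sigma$ as a map is equivalent to $\tau_{\tilde g}(\sigma)=0$, and since $\mathbf{h}$ and $\mathbf{v}$ are complementary projections this splits into the vanishing of the vertical component $\varphi(\tau_{\tilde g}(\sigma))\in\mathfrak{m}(M)$ and of the horizontal component $\pi_{N\ast}\tau_{\tilde g}(\sigma)\in TM$. The starting point is the identity $\tau_{\tilde g}(\sigma)=\operatorname{tr}_{\tilde g}(\Pi_g-\sigma_\ast S)$ recorded just before the proposition, combined with \eqref{eq:verticalsigma}, \eqref{eq:vtau}, \eqref{eq:htau}, and \eqref{eq:R_Tproperty}.

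For the vertical part, I apply $\varphi$ termwise. On the $\Pi_g$ contribution, \eqref{eq:vtau} with $X=Y=\tilde{e_i}$ gives $\varphi(\Pi_g(\tilde{e_i},\tilde{e_i}))=(\nabla_{\tilde{e_i}}\xi)_{\tilde{e_i}}$, while on the correction term \eqref{eq:verticalsigma} yields $\varphi(\sigma_\ast S(\tilde{e_i},\tilde{e_i}))=\xi_{S(\tilde{e_i},\tilde{e_i})}$. Summing over $i$ produces the first equation. For the horizontal part, I apply $\pi_{N\ast}$ termwise and then pair against an arbitrary $Z\in TM$. Since $\pi_{N\ast}\sigma_\ast=\mathrm{id}_{TM}$, the correction term collapses to $\sum_i S(\tilde{e_i},\tilde{e_i})$. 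For the $\Pi_g$ term, \eqref{eq:htau} with $X=Y=\tilde{e_i}$ gives $g(\pi_{N\ast}\Pi_g(\tilde{e_i},\tilde{e_i}),Z)={\bf B}(\xi_{\tilde{e_i}},R_{\mathfrak{m}}(\tilde{e_i},Z))$; because the decomposition $\mathfrak{so}(M)=\mathfrak{g}(M)\oplus\mathfrak{m}(M)$ is ${\bf B}$-orthogonal and $\xi_{\tilde{e_i}}\in\mathfrak{m}(M)$, this equals ${\bf B}(\xi_{\tilde{e_i}},R(\tilde{e_i},Z))$, which by \eqref{eq:R_Tproperty} equals $g(R_{\xi_{\tilde{e_i}}}(\tilde{e_i}),Z)$. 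Since $Z$ is arbitrary, the vanishing of the horizontal component is equivalent to the second equation.

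There is no serious obstacle once the preparatory formulas are in place; the proof is essentially a bookkeeping exercise in the two natural projections $\varphi$ and $\pi_{N\ast}$. The only small subtlety worth flagging is the replacement of $R_{\mathfrak{m}}$ by $R$ in the horizontal step, which is justified by the ${\bf B}$-orthogonality of the reductive decomposition together with $\xi_{\tilde{e_i}}\in\mathfrak{m}(M)$; this is what allows \eqref{eq:htau} and \eqref{eq:R_Tproperty}, stated with different curvature objects, to be composed.
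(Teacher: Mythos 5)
Your proof is correct and follows exactly the route the paper intends: the paper's own proof is the one-line citation of \eqref{eq:verticalsigma}, \eqref{eq:vtau}, \eqref{eq:htau} and \eqref{eq:R_Tproperty}, and your argument is precisely the expansion of that citation, splitting $\tau_{\tilde g}(\sigma)=\operatorname{tr}_{\tilde g}(\Pi_g-\sigma_\ast S)$ via $\varphi$ and $\pi_{N\ast}$. The bookkeeping, including the replacement of $R_{\mathfrak{m}}$ by $R$ using ${\bf B}$-orthogonality of $\mathfrak{g}(M)\oplus\mathfrak{m}(M)$, is accurate.
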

\begin{proof}
Follows by \eqref{eq:verticalsigma}, \eqref{eq:vtau}, \eqref{eq:htau} and \eqref{eq:R_Tproperty}.
\end{proof}

\section{Properties of certain transfer tensor}\label{section4}

In this section we will introduce invertible tensor induced by the intrinsic torsion, the Riemannian metric def\/ined by this tensor and state the properties of the Levi-Civita connection of this new metric. Results in this section are generalizations of the results obtained by the author in \cite{kn}.

Adopt the notation from the previous section. For $\alpha\in \mathfrak{m}(M)$ put
\begin{gather*}
\xi\cdot \alpha=-\sum_i {\bf B}(\xi_{e_i},\alpha)e_i\in TM,
\end{gather*}
where we consider the intrinsic torsion as an element of $\mathfrak{m}(M)$. It is easy to show that
\begin{gather}\label{eq:xidotT}
g(\xi\cdot \alpha,X)=-{\bf B}(\alpha,\xi_X),\qquad \alpha\in\mathfrak{m}(M),\qquad X\in TM.
\end{gather}

Let $L$ be the endomorphism of the tangent bundle of the form
\begin{gather*}
L(X)=X-\xi\cdot \xi_X,\qquad X\in TM.
\end{gather*}
In order to derive some properties of $L$ recall the def\/inition of the Riemannian metric on the bundle ${\rm SO}(M)$ induced by the metric $g$ on $M$ and by the Killing form ${\bf B}$ on the structure group~${\rm SO}(n)$. We def\/ine the Riemannian metric $g_{{\rm SO}(M)}$ on~${\rm SO}(M)$ as follows
\begin{gather*}
g_{{\rm SO}(M)}\big(X^h,Y^h\big) =g(X,Y), \qquad
g_{{\rm SO}(M)}\big(X^h,\alpha^{\ast}\big) =0, \qquad
g_{{\rm SO}(M)}\big(\alpha^{\ast},\beta^{\ast}\big) ={\bf B}(\alpha,\beta),
\end{gather*}
where $X,Y\in TM$, $\alpha,\beta\in\mathfrak{so}(M)=\mathfrak{so}(n)_{{\rm SO}(M)}$. Recall that we identify element $\alpha\in \mathfrak{so}(M)$ with the equivariant vertical vector f\/ield denoted by $\alpha^{\ast}$. Then maps $\pi_{{\rm SO}(M)}\colon {\rm SO}(M)\to M$ and $\zeta\colon {\rm SO}(M)\to N$ are Riemannian submersions.

\begin{Proposition}\label{prop:PropertiesofL}
We have
\begin{gather*}
g_{{\rm SO}(M)}\big(X^{h'},Y^{h'}\big)=g(X,LY).
\end{gather*}
In particular, $L$ is a symmetric and positive definite automorphism of $TM$. Moreover, the covariant derivative of $L$ is related with the intrinsic torsion $\xi_X$ by the formula
\begin{gather*}
g((\nabla_XL)Y,Z)={\bf B}((\nabla_X\xi)_Y,\xi_Z)+{\bf B}((\nabla_X\xi)_Z,\xi_Y).
\end{gather*}
\end{Proposition}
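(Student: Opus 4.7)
The plan is to reduce all three assertions to a single computation of $g_{{\rm SO}(M)}(X^{h'},Y^{h'})$ and then differentiate. First I would substitute $X^{h'}=X^{h}+(\xi_X)^{\ast}$ from \eqref{eq:intrinsictorsionproperties} into the inner product. Because horizontal and fundamental vertical vectors are $g_{{\rm SO}(M)}$-orthogonal and the metric restricts to $g$ on horizontals and to ${\bf B}$ on verticals, the two cross terms vanish and what remains is $g(X,Y)+{\bf B}(\xi_X,\xi_Y)$. To identify this with $g(X,LY)=g(X,Y)-g(X,\xi\cdot\xi_Y)$, I invoke the defining property \eqref{eq:xidotT} of the tensor $\xi\cdot(\cdot)$, which gives $-g(X,\xi\cdot\xi_Y)={\bf B}(\xi_X,\xi_Y)$ directly. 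This proves the first formula.

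Symmetry of $L$ with respect to $g$ is then immediate since the expression $g_{{\rm SO}(M)}(X^{h'},Y^{h'})$ is manifestly symmetric in $X$ and $Y$; positive definiteness follows from $g(LX,X)=\|X^{h'}\|_{g_{{\rm SO}(M)}}^{2}\ge 0$, with equality only when $X^{h'}=0$, and hence only when $X=\pi_{P\ast}X^{h'}=0$. For the covariant derivative identity, I would differentiate the identity $g(LY,Z)=g(Y,Z)+{\bf B}(\xi_Y,\xi_Z)$ in the direction $X$, using metric compatibility of $\nabla$ for $g$ and the fact that ${\bf B}$ defines a parallel bilinear form on $\mathfrak{so}(M)$. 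After rewriting $g(\nabla_{X}(LY),Z)=X\bigl(g(LY,Z)\bigr)-g(LY,\nabla_{X}Z)$ and subtracting $g(L\nabla_{X}Y,Z)=g(\nabla_{X}Y,Z)+{\bf B}(\xi_{\nabla_{X}Y},\xi_Z)$, the $\nabla_{X}Y$ contributions collapse into $(\nabla_{X}\xi)_{Y}$ via the definition $(\nabla_{X}\xi)_{Y}=\nabla_{X}\xi_Y-\xi_{\nabla_{X}Y}$.

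The only delicate bookkeeping step, and the one I would expect to be the main obstacle, is handling the residual term $-g(LY-Y,\nabla_{X}Z)=g(\xi\cdot\xi_Y,\nabla_{X}Z)$; applying \eqref{eq:xidotT} once more converts it to $-{\bf B}(\xi_Y,\xi_{\nabla_{X}Z})$, which is exactly what is needed to combine with the surviving ${\bf B}(\xi_Y,\nabla_{X}\xi_Z)$ term into ${\bf B}(\xi_Y,(\nabla_{X}\xi)_{Z})$. Using symmetry of ${\bf B}$ in the final step produces the claimed formula ${\bf B}((\nabla_{X}\xi)_{Y},\xi_Z)+{\bf B}((\nabla_{X}\xi)_{Z},\xi_Y)$. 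No curvature identities from \eqref{eq:curvaturedecomp} are needed; the proof is purely tensorial once the basic identity from part one is in hand.
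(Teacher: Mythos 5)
Your proposal is correct and follows essentially the same route as the paper: the first identity is obtained by expanding $X^{h'}=X^h+(\xi_X)^{\ast}$ against the definition of $g_{{\rm SO}(M)}$ and invoking \eqref{eq:xidotT}, and the derivative formula comes from differentiating $g(LY,Z)=g(Y,Z)+{\bf B}(\xi_Y,\xi_Z)$ using metric compatibility of $\nabla$ for both $g$ and ${\bf B}$, with the $\xi_{\nabla_XY}$ and $\xi_{\nabla_XZ}$ terms assembling into $(\nabla_X\xi)_Y$ and $(\nabla_X\xi)_Z$. The bookkeeping you flag as delicate is handled identically in the paper, so there is nothing to add.
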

\begin{proof}
By \eqref{eq:xidotT} and \eqref{eq:intrinsictorsionproperties},
\begin{gather*}
g(X,LY) =g(X,Y)+{\bf B}(\xi_X,\xi_Y)=g_{{\rm SO}(M)}\big(X^h+\xi_X,Y^h+\xi_Y\big) =g_{{\rm SO}(M)}\big(X^{h'},Y^{h'}\big).
\end{gather*}
Thus $L$ is symmetric and positive def\/inite. Moreover, by the fact that $\nabla$ is metric for ${\bf B}$ we have
\begin{gather*}
g((\nabla_XL)Y,Z)=g(\nabla_X(LY),Z)-g(L(\nabla_XY),Z)\\
\hphantom{g((\nabla_XL)Y,Z)}{} =Xg(LY,Z)-g(LY,\nabla_XZ)-g(\nabla_XY,LZ)\\
\hphantom{g((\nabla_XL)Y,Z)}{}=Xg(Y,Z)+X{\bf B}(\xi_Y,\xi_Z)-g(Y,\nabla_XZ)-{\bf B}(\xi_Y,\xi_{\nabla_XZ})\\
\hphantom{g((\nabla_XL)Y,Z)=}{}-g(\nabla_XY,Z)-{\bf B}(\xi_{\nabla_XY},\xi_Z)\\
\hphantom{g((\nabla_XL)Y,Z)}{}={\bf B}(\nabla_X\xi_Y-\xi_{\nabla_XY},\xi_Z) +{\bf B}(\nabla_X\xi_Z-\xi_{\nabla_XZ},\xi_Y).\tag*{\qed}
\end{gather*}\renewcommand{\qed}{}
\end{proof}

By Proposition \ref{prop:PropertiesofL} the symmetric and bilinear form
\begin{gather*}
\tilde{g}(X,Y)=g(X,LY),\qquad X,Y\in TM,
\end{gather*}
def\/ines a Riemannian metric on $M$. We call $L$ the {\it transfer tensor} between $g$ and~$\tilde g$. Notice, that the projection $\pi_P\colon P\to M$ is a Riemannian submersion with respect to $g_{{\rm SO}(M)}$ on~$P$ and~$\tilde{g}$ on~$M$. Denote by $\tilde{\nabla}$ the Levi-Civita connection of~$\tilde{g}$. One can show that \cite{gm}
\begin{gather*}
2g\big(\tilde{\nabla}_XY-\nabla_XY,LZ\big)=g((\nabla_XL)Y,Z)+g((\nabla_YL)X,Z)-g(X,(\nabla_ZL)Y).
\end{gather*}
Thus by Proposition \ref{prop:PropertiesofL} we get
\begin{gather}
2g\big(\tilde{\nabla}_XY-\nabla_XY,LZ\big) ={\bf B}((\nabla_X\xi)_Y+(\nabla_Y\xi)_X,\xi_Z)\nonumber\\
\hphantom{2g\big(\tilde{\nabla}_XY-\nabla_XY,LZ\big) =}{} +{\bf B}((\nabla_X\xi)_Z-(\nabla_Z\xi)_X,\xi_Y)+{\bf B}((\nabla_Y\xi)_Z-(\nabla_Z\xi)_Y,\xi_X).\label{eq:differencetensor}
\end{gather}

\section[Geometry of $G$-structures]{Geometry of $\boldsymbol{G}$-structures}\label{section5}

In this section we study the geometry of a $G$-structure $P$ in ${\rm SO}(M)$. In this case we need to consider the Levi-Civita connection $\nabla^{{\rm SO}(M)}$ of the Riemannian metric~$g_{{\rm SO}(M)}$. One can show~\cite{kn} that
\begin{alignat*}{3}
& \nabla^{{\rm SO}(M)}_{X^h}X^h =(\nabla_XY)^h-\frac{1}{2}R(X,Y)^{\ast},\qquad && \nabla^{{\rm SO}(M)}_{\alpha^{\ast}}Y^h =\frac{1}{2}R_{\alpha}(Y)^h,&\\
& \nabla^{{\rm SO}(M)}_{X^h}\alpha^{\ast} =\frac{1}{2}R_{\alpha}(X)^h+(\nabla_X\alpha)^{\ast},\qquad &&
\nabla^{{\rm SO}(M)}_{\alpha^{\ast}}\beta^{\ast} =-\frac{1}{2}[\alpha,\beta]^{\ast},&
\end{alignat*}
for $X,Y\in \Gamma(TM)$ and $\alpha,\beta\in\mathfrak{so}(M)$, or equivalently, $\alpha,\beta\in \mathfrak{so}(n)_{{\rm SO}(M)}$, where $R_{\alpha}$ is def\/ined by~\eqref{eq:defRT}.

It is convenient to f\/ind orthogonal projections
\begin{gather*}
T{\rm SO}(M)\mapsto TPq\quad\textrm{and}\qquad T{\rm SO}(M)\mapsto T^{\bot}P,
\end{gather*}
in order to derive the Levi-Civita connection and second fundamental form for $P$ in ${\rm SO}(M)$.

\begin{Proposition}\label{prop:ProjtoTP}
Let $X\in TM$ and $\alpha\in\mathfrak{so}(M)$.
\begin{enumerate}\itemsep=0pt
\item[$1.$] The orthogonal projection $T{\rm SO}(M)\mapsto TP$ equals
\begin{gather*}
X^h\mapsto L^{-1}(X)^{h'},\qquad
\alpha^{\ast}\mapsto \alpha_{\mathfrak{g}}^{\ast}-L^{-1}(\xi\cdot\alpha_{\mathfrak{m}})^{h'}.
\end{gather*}
\item[$2.$] The orthogonal projection $T{\rm SO}(M)\mapsto T^{\bot}P$ equals
\begin{gather*}
X^h\mapsto -(\xi_{L^{-1}(X)})^{\ast}-(\xi\cdot\xi_{L^{-1}X})^h,
\qquad
\alpha^{\ast}\mapsto \alpha_{\mathfrak{m}}^{\ast}
+(\xi_{L^{-1}(\xi\cdot\alpha_{\mathfrak{m}})})^{\ast}
+L^{-1}(\xi\cdot\alpha_{\mathfrak{m}})^h.
\end{gather*}
\end{enumerate}
\end{Proposition}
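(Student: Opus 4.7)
I will use the decomposition of $T_pP$ induced by the minimal connection, $T_pP = \mathcal{H}'_p \oplus \mathcal{V}'_p$, where $\mathcal{H}'_p$ consists of the lifts $X^{h'}$ and $\mathcal{V}'_p$ of the fundamental vertical fields $\beta^{\ast}$ with $\beta\in\mathfrak{g}(M)$. The two computational inputs are the identity $X^{h'}=X^h+(\xi_X)^{\ast}$ from \eqref{eq:intrinsictorsionproperties} and the inner-product formula $g_{{\rm SO}(M)}(X^{h'},Y^{h'})=g(X,LY)$ from Proposition~\ref{prop:PropertiesofL}. For each claimed image I will check two things: that it lies in $TP$ (respectively $T^{\bot}P$), and that the residual is $g_{{\rm SO}(M)}$-orthogonal to every $Y^{h'}$ and to every $\beta^{\ast}$ with $\beta\in\mathfrak{g}(M)$.

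For the horizontal input $X^h$, the proposal $L^{-1}(X)^{h'}$ passes both tests. Pairing with $Y^{h'}$ gives $g(L^{-1}(X),LY)=g(X,Y)=g_{{\rm SO}(M)}(X^h,Y^{h'})$ by self-adjointness of $L$, while pairing with $\beta^{\ast}\in\mathcal{V}'$ makes both sides vanish, the right-hand one because $\xi_{L^{-1}(X)}\in\mathfrak{m}(M)$ is ${\bf B}$-orthogonal to $\beta\in\mathfrak{g}(M)$. For the vertical input $\alpha^{\ast}=\alpha_{\mathfrak{g}}^{\ast}+\alpha_{\mathfrak{m}}^{\ast}$, the $\mathfrak{g}$-piece is already in $\mathcal{V}'\subset TP$. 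The $\mathfrak{m}$-piece is projected by computing, via \eqref{eq:xidotT}, $g_{{\rm SO}(M)}(\alpha_{\mathfrak{m}}^{\ast},Y^{h'})={\bf B}(\alpha_{\mathfrak{m}},\xi_Y)=-g(\xi\cdot\alpha_{\mathfrak{m}},Y)$, which matches the pairing of $-L^{-1}(\xi\cdot\alpha_{\mathfrak{m}})^{h'}$ against $Y^{h'}$; the $\mathcal{V}'$-pairing is zero on both sides by $\mathfrak{m}\perp\mathfrak{g}$. This yields part~1.

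The $T^{\bot}P$-formulas in part~2 are then obtained by subtracting the $TP$-projections from the original vectors and re-expanding each $X^{h'}$ as $X^h+(\xi_X)^{\ast}$. The only substantive rewrite uses the defining identity $L(Y)=Y-\xi\cdot\xi_Y$ applied to $Y=L^{-1}(X)$, giving $X-L^{-1}(X)=-\xi\cdot\xi_{L^{-1}(X)}$; this converts $X^h-L^{-1}(X)^{h'}$ into the stated $-(\xi_{L^{-1}(X)})^{\ast}-(\xi\cdot\xi_{L^{-1}(X)})^h$, and the same substitution handles the vertical case $\alpha^{\ast}\mapsto \alpha_{\mathfrak{m}}^{\ast}+(\xi_{L^{-1}(\xi\cdot\alpha_{\mathfrak{m}})})^{\ast}+L^{-1}(\xi\cdot\alpha_{\mathfrak{m}})^h$.

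There is no serious obstacle beyond bookkeeping: the essential point is to recognise that $L$ is designed precisely to absorb the cross-term ${\bf B}(\xi_X,\xi_Y)$ that appears when $X^h$ is compared with $Y^{h'}=Y^h+(\xi_Y)^{\ast}$, so that testing the proposed projections against a basis of $TP$ reduces to the reductive splitting $\mathfrak{so}(n)=\mathfrak{g}\oplus\mathfrak{m}$ and the identity \eqref{eq:xidotT}.
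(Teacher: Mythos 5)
Your proof is correct and follows essentially the same route as the paper's: both verify that the candidate tangential images lie in $TP=\mathcal{H}'\oplus\mathfrak{g}_P$ and that the residuals are $g_{{\rm SO}(M)}$-orthogonal to every $Y^{h'}$ and every $\beta^{\ast}$ with $\beta\in\mathfrak{g}_P$, using \eqref{eq:intrinsictorsionproperties}, \eqref{eq:xidotT} and Proposition~\ref{prop:PropertiesofL}. The derivation of part~2 by subtracting the tangential projection and rewriting via $X-L^{-1}(X)=-\xi\cdot\xi_{L^{-1}(X)}$ is exactly the paper's concluding step.
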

\begin{proof}
We will use frequently \eqref{eq:intrinsictorsionproperties} and \eqref{eq:xidotT}. Recall that $TP=\mathcal{H}'\oplus\mathfrak{m}_P$. For any $Y\in TM$
\begin{gather*}
g_{{\rm SO}(M)}\big(X^h-L^{-1}(X)^{h'},Y^{h'}\big) =g_{{\rm SO}(M)}\big(X^h-L^{-1}(X)^h-\xi_{L^{-1}(X)}^{\ast},Y^h+\xi_Y\big)\\
\hphantom{g_{{\rm SO}(M)}\big(X^h-L^{-1}(X)^{h'},Y^{h'}\big)}{} =g\big(X-L^{-1}(X),Y\big)-{\bf B}\big(\xi_{L^{-1}(X)},\xi_Y\big)=0
\end{gather*}
and for any $\beta\in\mathfrak{g}_P$
\begin{gather*}
g_{{\rm SO}(M)}\big(X^h-L^{-1}(X)^{h'},\beta^{\ast}\big)=-{\bf B}\big(\xi_{L^{-1}(X)},\beta\big)=0.
\end{gather*}
Clearly $L^{-1}(X)$ is tangent to $P$ and $X^h-L^{-1}(X)^{h'}=-(\xi_{L^{-1}(X)})^{\ast}-(\xi\cdot\xi_{L^{-1}X})^h$, which proves the desired decomposition for $X^h$. Analogously, \begin{gather*}
\alpha^{\ast}-\big(\alpha_{\mathfrak{g}}^{\ast}-L^{-1}(\xi\cdot\alpha_{\mathfrak{m}})^{h'}\big)=
\alpha_{\mathfrak{m}}^{\ast}+L^{-1}(\xi\cdot\alpha_{\mathfrak{m}})^{h'},
\end{gather*}
and
\begin{gather*}
g_{{\rm SO}(M)}\big(\alpha_{\mathfrak{m}}^{\ast}+L^{-1}(\xi\cdot\alpha_{\mathfrak{m}})^{h'},Y^{h'}\big) =
{\bf B}(\alpha_{\mathfrak{m}},\xi_Y)+g\big(L^{-1}(\xi\cdot\alpha_{\mathfrak{m}}),Y\big)+{\bf B}\big(\xi_{L^{-1}(\xi\cdot\alpha_{\mathfrak{m}})},\xi_Y\big)\\
\hphantom{g_{{\rm SO}(M)}\big(\alpha_{\mathfrak{m}}^{\ast}+L^{-1}(\xi\cdot\alpha_{\mathfrak{m}})^{h'},Y^{h'}\big)}{}
=-g(\xi\cdot\alpha_{\mathfrak{m}},Y)
+g\big(L^{-1}(\xi\cdot\alpha_{\mathfrak{m}},Y)\big)\!-g\big(\xi\cdot\xi_{L^{-1}(\xi\cdot\alpha_{\mathfrak{m}})},Y\big)\!\\
\hphantom{g_{{\rm SO}(M)}\big(\alpha_{\mathfrak{m}}^{\ast}+L^{-1}(\xi\cdot\alpha_{\mathfrak{m}})^{h'},Y^{h'}\big)}{}=0.
\end{gather*}
Moreover, for $\beta\in\mathfrak{g}_P$
\begin{gather*}
g_{{\rm SO}(M)}\big(\alpha_{\mathfrak{m}}^{\ast}+L^{-1}(\xi\cdot\alpha_{\mathfrak{m}})^{h'},\beta^{\ast}\big)=
{\bf B}(\alpha_{\mathfrak{m}},\beta)
+{\bf B}\big(\xi_{L^{-1}(\xi\cdot\alpha_{\mathfrak{m}})},\beta\big)=0.
\end{gather*}
Proposition follows from the fact that
\begin{gather*}
L^{-1}(\xi\cdot\alpha_{\mathfrak{m}})^{h'}=L^{-1}(\xi\cdot\alpha_{\mathfrak{m}})^h+\big(\xi_{L^{-1}(\xi\cdot\alpha_{\mathfrak{m}})}\big)^{\ast}
\end{gather*}
and $\alpha_{\mathfrak{g}}^{\ast}-L^{-1}(\xi\cdot\alpha_{\mathfrak{m}})^{h'}$ is tangent to $P$.
\end{proof}

\subsection{Intrinsic geometry}

For the intrinsic geometry we will compute the Levi-Civita connection $\nabla^P$ of $(P,g_{{\rm SO}(M)})$, the curvature tensor, Ricci tensor and sectional and scalar curvatures. We will compare geometry of $P$ with the geometry of the base manifold~$M$.

Let us f\/irst introduce operator $Q_{\alpha}$ and establish some relations. For $X\in TM$ and $\alpha\in\mathfrak{so}(M)$ put
\begin{gather*}
Q_{\alpha}(X)=L^{-1}(R_{\alpha}(X)-\xi\cdot(\nabla_X\alpha)_{\mathfrak{m}}).
\end{gather*}
Moreover, it is easy to see that for any $\alpha\in\mathfrak{so}(M)$ we have the following relations with respect to the decomposition $\mathfrak{so}(M)=\mathfrak{g}(M)\oplus\mathfrak{m}(M)$:
\begin{gather}\label{eq:basicrelations}
\begin{split}
&\nabla_X\alpha_{\mathfrak{g}} =\nabla'_X\alpha_{\mathfrak{g}}+[\xi_X,\alpha_{\mathfrak{g}}],\\
&\nabla_X\alpha_{\mathfrak{m}} =[\xi_X,\alpha_{\mathfrak{m}}]_{\mathfrak{g}}+\big( \nabla'_X\alpha_{\mathfrak{m}}+[\xi_X,\alpha_{\mathfrak{m}}]_{\mathfrak{m}} \big).
\end{split}
\end{gather}

\begin{Theorem}\label{thm:nablaP}
The Levi-Civita connection $\nabla^P$ of $P$ with the induced Riemannian metric $g_{{\rm SO}(M)}$ from ${\rm SO}(M)$ is of the following form
\begin{alignat*}{3}
& \nabla^P_{X^{h'}}Y^{h'} =(\tilde\nabla_XY)^{h'}-\frac{1}{2}R'(X,Y)^{\ast},\qquad && \nabla^P_{\alpha^{\ast}}Y^{h'} =\frac{1}{2}Q_{\alpha}(Y)^{h'},&\\
& \nabla^P_{X^{h'}}\beta^{\ast} =\frac{1}{2}Q_{\beta}(X)^{h'}+(\nabla'_X\beta)^{\ast},\qquad && \nabla^P_{\alpha^{\ast}}\beta^{\ast} =-\frac{1}{2}[\alpha,\beta]^{\ast},&
\end{alignat*}
where $X,Y\in TM$, $\alpha,\beta\in\mathfrak{g}_P$.
\end{Theorem}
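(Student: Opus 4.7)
The natural approach is via the Gauss formula: since $P$ is a submanifold of $({\rm SO}(M), g_{{\rm SO}(M)})$, its Levi-Civita connection $\nabla^P$ is obtained by tangentially projecting $\nabla^{{\rm SO}(M)}$ onto $TP$ using Proposition~\ref{prop:ProjtoTP}. The identity $X^{h'} = X^h + \xi_X^{\ast}$ from \eqref{eq:intrinsictorsionproperties} extends $X^{h'}$ from $P$ to a vector field on all of ${\rm SO}(M)$ (viewing $\xi_X$ as a section of $\mathfrak{so}(M)$ via the inclusion $\mathfrak{m}(M)\subset\mathfrak{so}(M)$), while the fundamental fields $\alpha^{\ast}$ for $\alpha\in\mathfrak{g}_P$ are already defined on all of ${\rm SO}(M)$. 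The plan is, for each of the four cases, to expand $\nabla^{{\rm SO}(M)}$ using the formulas listed at the start of Section~\ref{section5} and then apply Proposition~\ref{prop:ProjtoTP}.

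The degenerate cases are quick. For $\nabla^P_{\alpha^{\ast}}\beta^{\ast}$ with $\alpha,\beta\in\mathfrak{g}_P$, the output $-\frac{1}{2}[\alpha,\beta]^{\ast}$ already lies in $\mathfrak{g}_P\subset TP$ since $[\mathfrak{g},\mathfrak{g}]\subset\mathfrak{g}$. For $\nabla^P_{\alpha^{\ast}}Y^{h'}$ and $\nabla^P_{X^{h'}}\beta^{\ast}$, formula \eqref{eq:basicrelations} applied with $\alpha_{\mathfrak{m}}=0$ gives $\nabla_X\beta=\nabla'_X\beta+[\xi_X,\beta]$ with $\nabla'_X\beta\in\mathfrak{g}$ and $[\xi_X,\beta]\in\mathfrak{m}$. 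The $\mathfrak{m}$-vertical contribution produces, via Proposition~\ref{prop:ProjtoTP}, a horizontal correction $-\frac{1}{2}L^{-1}(\xi\cdot[\xi_X,\beta])^{h'}$ that combines with the horizontal term $\frac{1}{2}L^{-1}(R_\beta(X))^{h'}$ to assemble into $\frac{1}{2}Q_\beta(X)^{h'}$; the $\mathfrak{g}$-vertical term $(\nabla'_X\beta)^{\ast}$ survives unchanged.

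The main case is $\nabla^P_{X^{h'}}Y^{h'}$. Expanding $\nabla^{{\rm SO}(M)}_{X^h+\xi_X^{\ast}}(Y^h+\xi_Y^{\ast})$ into four terms, the vertical $\mathfrak{so}(n)_{{\rm SO}(M)}$-valued part is $-\frac{1}{2}R(X,Y)+\nabla_X\xi_Y-\frac{1}{2}[\xi_X,\xi_Y]$, which by \eqref{eq:curvaturedecomp} decomposes into a $\mathfrak{g}$-part equal to $-\frac{1}{2}R'(X,Y)$ and an $\mathfrak{m}$-part $\eta:=\frac{1}{2}(\nabla'_X\xi_Y+\nabla'_Y\xi_X+\xi_{[X,Y]})$. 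Projecting onto $TP$ yields
\[
\nabla^P_{X^{h'}}Y^{h'}=L^{-1}\Bigl(\nabla_XY+\frac{1}{2}\bigl(R_{\xi_X}(Y)+R_{\xi_Y}(X)\bigr)-\xi\cdot\eta\Bigr)^{h'}-\frac{1}{2}R'(X,Y)^{\ast},
\]
and matching against $(\tilde\nabla_XY)^{h'}-\frac{1}{2}R'(X,Y)^{\ast}$ reduces the theorem to the identity
\[
L(\tilde\nabla_XY-\nabla_XY)=\frac{1}{2}\bigl(R_{\xi_X}(Y)+R_{\xi_Y}(X)\bigr)+\xi\cdot\xi_{\nabla_XY}-\xi\cdot\eta,
\]
where the extra $\xi\cdot\xi_{\nabla_XY}$ comes from $L\nabla_XY-\nabla_XY=-\xi\cdot\xi_{\nabla_XY}$.

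The expected main obstacle is this last identity. I would pair both sides with an arbitrary $Z\in TM$ via $g$, convert the right side using $g(\xi\cdot\alpha,W)=-\mathbf{B}(\alpha,\xi_W)$ and $g(R_\alpha X,W)=\mathbf{B}(\alpha,R(X,W))$, and recognise the left side as $2g(\tilde\nabla_XY-\nabla_XY,LZ)$ computed by \eqref{eq:differencetensor}. Substituting \eqref{eq:Rdecomp2} into the $\mathbf{B}(\xi_X,R(Y,Z))$ and $\mathbf{B}(\xi_Y,R(X,Z))$ terms, the $R'$-contributions vanish because $\mathbf{B}$ annihilates $\mathfrak{g}\times\mathfrak{m}$, the triple-bracket terms cancel by the ad-invariance identity $\mathbf{B}(\xi_X,[\xi_Y,\xi_Z])+\mathbf{B}(\xi_Y,[\xi_X,\xi_Z])=0$, and the remaining bookkeeping reduces, via \eqref{eq:curvaturedecomp} and the torsion-free identity $\nabla_XY+\nabla_YX=2\nabla_XY-[X,Y]$, to the $\mathfrak{m}$-congruence $(\nabla_X\xi)_Y+(\nabla_Y\xi)_X\equiv\nabla'_X\xi_Y+\nabla'_Y\xi_X+\xi_{[X,Y]}-2\xi_{\nabla_XY}\pmod{\mathfrak{g}}$.
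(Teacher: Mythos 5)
Your proposal is correct and follows essentially the same route as the paper: extend $X^{h'}=X^h+\xi_X^{\ast}$, expand $\nabla^{{\rm SO}(M)}$, split the vertical part via \eqref{eq:curvaturedecomp}, project with Proposition~\ref{prop:ProjtoTP}, and reduce the main case to an identity for $\tilde\nabla-\nabla$ verified by pairing with $Z$ using \eqref{eq:differencetensor}, \eqref{eq:Rdecomp2} and the ad-invariance of ${\bf B}$. The paper packages that last identity as $\tilde\nabla_XY-\nabla_XY=\frac{1}{2}\bigl(Q_{\xi_X}(Y)+Q_{\xi_Y}(X)+\xi\cdot\xi_{\nabla_XY+\nabla_YX}\bigr)$ rather than in your $L$-applied form, but the computation is the same.
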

\begin{proof}
First, by \eqref{eq:xidotT} and \eqref{eq:R_Tproperty} we have
\begin{gather*}
\tilde{g}(Q_{\alpha}(X),Z)={\bf B}(R(X,Z),\alpha)+{\bf B}(\nabla_X\alpha,\xi_Y).
\end{gather*}
Thus, using \eqref{eq:Rdecomp2} and \eqref{eq:differencetensor}
\begin{gather*}
\tilde{g}(Q_{\xi_X}(Y)+Q_{\xi_Y}(X)+\xi\cdot\xi_{\nabla_XY+\nabla_YX},Z)\\
\qquad {}= {\bf B}((\nabla_X\xi)_Z-(\nabla_Z\xi)_X,\xi_Y)+{\bf B}((\nabla_Y\xi)_Z-(\nabla_Z\xi)_Y,\xi_X)-{\bf B}([\xi_X,\xi_Z],\xi_Y)\\
\qquad \quad{}-{\bf B}([\xi_Y,\xi_Z],\xi_X)+{\bf B}((\nabla_Y\xi)_X+(\nabla_X\xi)_Y,\xi_Z)\\
\qquad {} =2\tilde{g}(\tilde{\nabla}_XY-\nabla_XY,Z).
\end{gather*}
We have shown that
\begin{gather}\label{eq:nablaPproof}
P(X,Y)=\frac{1}{2}\big(Q_{\xi_X}(Y)+Q_{\xi_Y}(X)+\xi\cdot\xi_{\nabla_XY+\nabla_YX}\big).
\end{gather}
By the formula for the connection $\nabla^{{\rm SO}(M)}$
\begin{gather*}
\nabla^P_{X^{h'}}Y^{h'} = \nabla^{{\rm SO}(M)}_{X^h}Y^h+\nabla^{{\rm SO}(M)}_{(\xi_X)^{\ast}}Y^h
+\nabla^{{\rm SO}(M)}_{X^h}(\xi_Y)^{\ast}+\nabla^{{\rm SO}(M)}_{(\xi_X)^{\ast}}(\xi_Y)^{\ast}\\
\hphantom{\nabla^P_{X^{h'}}Y^{h'}}{}
=\left(\nabla_XY+\frac{1}{2}R_{\xi_Y}(X)+\frac{1}{2}R_{\xi_X}(Y)\right)^h+\left(-\frac{1}{2}R(X,Y)+\nabla_X\xi_Y-\frac{1}{2}[\xi_X,\xi_Y]\right)^{\ast}.
\end{gather*}
Since, by relations \eqref{eq:curvaturedecomp} between intrinsic torsion and curvature tensor,
\begin{gather*}
\left(-\frac{1}{2}R(X,Y)+\nabla_X\xi_Y-\frac{1}{2}[\xi_X,\xi_Y]\right)_{\mathfrak{g}}=-\frac{1}{2}R'(X,Y)
\end{gather*}
and
\begin{gather*}
\left(-\frac{1}{2}R(X,Y)+\nabla_X\xi_Y-\frac{1}{2}[\xi_X,\xi_Y]\right)_{\mathfrak{m}}=\frac{1}{2}\left( \nabla_X\xi_Y+\nabla_Y\xi_X+\xi_{[X,Y]} \right),
\end{gather*}
then, with the use of Proposition~\ref{prop:ProjtoTP}, we get
\begin{gather*}
\nabla^P_{X^{h'}}Y^{h'}=L^{-1}\left(\frac{1}{2}R_{\xi_Y}(X)+\frac{1}{2}R_{\xi_X}(Y)-\frac{1}{2}\xi\cdot(\nabla_X\xi_Y+\nabla_Y\xi_X)\right)^{h'}\\
\hphantom{\nabla^P_{X^{h'}}Y^{h'}=}{}
+L^{-1}\left(\nabla_XY-\frac{1}{2}\xi\cdot\xi_{[X,Y]}\right)^{h'}-\frac{1}{2}R'(X,Y)^{\ast}.
\end{gather*}
Moreover $\nabla_XY-\frac{1}{2}\xi\cdot\xi_{[X,Y]}=L(\nabla_XY)+\xi\cdot\xi_{\nabla_XY+\nabla_YX}$. Thus, by the def\/inition of $Q_{\alpha}$ and formula~\eqref{eq:nablaPproof} we get the desired formula for $\nabla^P_{X^{h'}}Y^{h'}$.

For the proof of the second formula we will use \eqref{eq:basicrelations} and Proposition~\ref{prop:ProjtoTP}. We have
\begin{gather*}
\nabla^P_{X^{h'}}\beta^{\ast} =\left( \nabla^{{\rm SO}(M)}_{X^h}\beta^{\ast}
+\nabla^{{\rm SO}(M)}_{(\xi_X)^{\ast}}\beta^{\ast} \right)^{\top} =\left(\frac{1}{2}R_{\beta}(X)^h+(\nabla_X\beta)^{\ast}-\frac{1}{2}[\xi_X,\beta]^{\ast}\right)^{\top}\\
\hphantom{\nabla^P_{X^{h'}}\beta^{\ast}}{}
=L^{-1}\left(\frac{1}{2}R_{\beta}(X)-\xi\cdot(\nabla_X\beta)_{\mathfrak{m}}+\frac{1}{2}\xi\cdot[\xi_X,\beta]_{\mathfrak{m}}\right)^{h'}
+(\nabla_X\beta)^{\ast}_{\mathfrak{g}}\\
\hphantom{\nabla^P_{X^{h'}}\beta^{\ast}}{}
=Q_{\beta}(X)^{h'}+(\nabla'_X\beta)^{\ast}.
\end{gather*}
We prove the remaining relations analogously.
\end{proof}

\begin{Proposition}\label{prop:RP}
The curvature tensor $R^P$ of $\nabla^P$ equals
\begin{gather*}
R^P\big(X^{h'},Y^{h'}\big)Z^{h'} =\big(\tilde{R}(X,Y)Z\big)^{h'}-\frac{1}{4}\big( Q_{R'(Y,Z)}(X)-Q_{R'(X,Z)}(Y)-2Q_{R(X,Y)}(Z)\big)^{h'}\\
\hphantom{R^P\big(X^{h'},Y^{h'}\big)Z^{h'} =}{}-\frac{1}{2}(D_XR')(Y,Z)^{\ast}+\frac{1}{2}(D_YR')(X,Z)^{\ast},\\
R^P\big(X^{h'},Y^{h'}\big)\gamma^{\ast} =\frac{1}{2}\big((D_XQ)_{\gamma}(Y)-(D_YQ)_{\gamma}(X)\big)^{h'}+\frac{1}{2}[R'(X,Y),\gamma]^{\ast}\\
\hphantom{R^P\big(X^{h'},Y^{h'}\big)\gamma^{\ast} =}{}
-\frac{1}{4}\big( R'(X,Q_{\gamma}(Y))-R'(Y,Q_{\gamma}(X)) \big)^{\ast},\\
R^P\big(X^{h'},\beta^{\ast}\big)Z^{h'} =\frac{1}{2}(D_XQ)_{\beta}(Z)^{h'}-\frac{1}{4}\big( R'(X,Q_{\beta}(Z))+[\beta,R'(X,Z)] \big)^{\ast},\\
R^P\big(X^{h'},\beta^{\ast}\big)\gamma^{\ast} =-\frac{1}{4}\big( Q_{[\beta,\gamma]}(X)+Q_{\beta}(Q_{\gamma}(X)) \big)^{h'},\\
R^P(\alpha^{\ast},\beta^{\ast})Z^{h'} =\frac{1}{4}[Q_{\alpha},Q_{\beta}](Z)^{h'}+\frac{1}{2}Q_{[\alpha,\beta]}(Z)^{h'},\\
R^P(\alpha^{\ast},\beta^{\ast})\gamma^{\ast} =-\frac{1}{4}[[\alpha,\beta],\gamma]^{\ast},
\end{gather*}
where
\begin{gather*}
(D_XR')(Y,Z) =\nabla '_XR'(Y,Z)-R'(\tilde{\nabla}_XY,Z)-R'(Y,\tilde{\nabla}_XZ),\\
(D_XQ)_{\alpha}(Y) =\tilde{\nabla}_XQ_{\alpha}(Y)-Q_{\nabla '_X\alpha}(Y)-Q(\tilde{\nabla}_XY).
\end{gather*}
\end{Proposition}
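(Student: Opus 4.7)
\smallskip

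\noindent\textbf{Proof proposal.} I would compute $R^P$ directly from the definition
\[
R^P(U,V)W=\nabla^P_U\nabla^P_V W-\nabla^P_V\nabla^P_U W-\nabla^P_{[U,V]}W,
\]
on each of the six combinations of basic vectors, substituting the formulas of Theorem~\ref{thm:nablaP} twice and organizing the output according to the splitting $TP=\mathcal H'\oplus\mathfrak g_P$.

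The first step is to record the Lie brackets of basic vector fields on the principal bundle $P(M,G)$. Because $\omega_{\mathfrak g}$ is a principal connection with curvature $R'$ and the $\alpha^*$'s are equivariant fundamental vertical fields, the standard formulas give
\begin{gather*}
[X^{h'},Y^{h'}]=[X,Y]^{h'}-R'(X,Y)^*,\qquad
[X^{h'},\beta^*]=(\nabla'_X\beta)^*,\qquad
[\alpha^*,\beta^*]=-[\alpha,\beta]^*
\end{gather*}
for $X,Y\in\Gamma(TM)$ and $\alpha,\beta\in\mathfrak g_P$.

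With these in hand each case is a direct computation. In the $(X^{h'},Y^{h'},Z^{h'})$ case, iterating $\nabla^P$ twice produces a $\tilde\nabla_X\tilde\nabla_YZ$ block that becomes $\tilde R(X,Y)Z$ after antisymmetrization and subtraction of $\tilde\nabla_{[X,Y]}Z$; vertical pieces $-\tfrac12\nabla'_XR'(Y,Z)+\tfrac12 R'(X,\tilde\nabla_YZ)$ etc.\ that recombine precisely into $-\tfrac12(D_XR')(Y,Z)+\tfrac12(D_YR')(X,Z)$ by the very definition of $D_XR'$, the missing $\tfrac12 R'([X,Y],Z)$ being supplied by the bracket term; and horizontal $Q$-terms $-\tfrac14[Q_{R'(Y,Z)}(X)-Q_{R'(X,Z)}(Y)]+\tfrac12 Q_{R'(X,Y)}(Z)$, in which the last coefficient is reorganized using (\ref{eq:curvaturedecomp}) to match the $R$ appearing in the stated expression. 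The mixed cases $(X^{h'},Y^{h'},\gamma^*)$ and $(X^{h'},\beta^*,Z^{h'})$ are analogous, with $(D_XQ)_\gamma$ playing the role of $D_XR'$ and brackets $[R'(X,Y),\gamma]^*$ coming from $\nabla^P_{\alpha^*}\beta^*=-\tfrac12[\alpha,\beta]^*$ applied to the curvature contributions inside the iterated derivative. The three remaining cases with two or three vertical arguments reduce immediately to substitution and the Jacobi identity for $[\cdot,\cdot]$.

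The main obstacle is purely bookkeeping: each case produces on the order of ten $\mathcal H'$- and $\mathfrak g_P$-components, and matching them against the claimed right-hand sides requires systematic use of (\ref{eq:basicrelations}), (\ref{eq:curvaturedecomp}) and the defining formulas for $D_XR'$ and $D_XQ$, which have been introduced for exactly this purpose. No fresh geometric input is needed beyond Theorem~\ref{thm:nablaP}.
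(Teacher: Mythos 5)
Your proposal is correct and follows essentially the same route as the paper's (very terse) proof: a direct computation of $R^P$ from the connection formulas of Theorem~\ref{thm:nablaP}, using the bracket relations $[X^{h'},Y^{h'}]=[X,Y]^{h'}-R'(X,Y)^{\ast}$, $[X^{h'},\beta^{\ast}]=(\nabla'_X\beta)^{\ast}$, $[\alpha^{\ast},\beta^{\ast}]=-[\alpha,\beta]^{\ast}$ and the Jacobi identity. The one point to treat with care is your final remark in the first case: the bracket term yields $\frac{1}{2}Q_{R'(X,Y)}(Z)$, and since $Q_{R(X,Y)}\neq Q_{R'(X,Y)}$ in general (they differ by $Q$ applied to $(\nabla_X\xi)_Y-(\nabla_Y\xi)_X-[\xi_X,\xi_Y]$ by \eqref{eq:Rdecomp2}), this is not a mere reorganization via \eqref{eq:curvaturedecomp} but rather suggests the printed statement should carry $R'$ in that term.
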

\begin{proof}
Follows directly by Theorem \ref{thm:nablaP}, Jacobi identity and the relations
\begin{gather*}
R'(X,Y)\gamma=[R'(X,Y),\gamma],\qquad \nabla '_X[\beta,\gamma]=[\nabla '_X\beta,\gamma]+[\beta,\nabla '_X\gamma].
\end{gather*}
Notice that $[X^{h'},\beta^{\ast}]=(\nabla '_X\beta)^{\ast}$ and $[\alpha^{\ast},\beta^{\ast}]=-[\alpha,\beta]^{\ast}$.
\end{proof}

\begin{Corollary}\label{cor:RicciP}
The Ricci curvature tensor ${\rm Ric}^P$ equals
\begin{gather*}
{\rm Ric}^P\big(X^{h'},Y^{h'}\big) =\widetilde{{\rm Ric}}(X,Y)-\frac{3}{4}\sum_i{\bf B}\big(R'(X,\tilde{e_i}),R'(Y,\tilde{e_i})\big)
+\frac{1}{4}\sum_A \tilde{g}(Q_{\alpha_A}(X),Q_{\alpha_A}(Y)),\\
{\rm Ric}^P(X^{h'},\gamma^{\ast}) =\frac{1}{2}\big( \big(\widetilde{\operatorname{div}}\, Q\big)_{\gamma}(X)-\operatorname{tr}_{\tilde{g}}(D_XQ)_{\gamma} \big),\\
{\rm Ric}^P(\beta^{\ast},\gamma^{\ast}) =\frac{1}{4}\left( \sum_i \tilde{g}\big(Q_{\beta}(\tilde{e_i}),Q_{\gamma}(\tilde{e_i})\big)+\sum_A {\bf B}([\alpha_A,\beta],[\alpha_A,\gamma]) \right),
\end{gather*}
where
\begin{gather*}
\big(\widetilde{\operatorname{div}}\,  Q\big)_{\gamma}(X)=\sum_i\tilde{g}\big((D_{\tilde e_i}Q)_{\gamma}(X),\tilde{e_i}\big)
\end{gather*}
and $(\tilde{e_i})$ is an orthonormal basis with respect to $\tilde{g}$ on $M$ and $(\alpha_A)$ is an orthonormal basis of $\mathfrak{g}_P$ with respect to ${\bf B}$.
\end{Corollary}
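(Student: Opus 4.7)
The approach is to contract the curvature tensor $R^P$ computed in Proposition~\ref{prop:RP} against a suitable orthonormal frame. By Proposition~\ref{prop:PropertiesofL}, if $(\tilde e_i)$ is $\tilde g$-orthonormal on $M$ and $(\alpha_A)$ is $\mathbf{B}$-orthonormal in $\mathfrak{g}_P$, then $\{\tilde e_i^{h'}\}_i\cup\{\alpha_A^{\ast}\}_A$ is $g_{{\rm SO}(M)}$-orthonormal on $P$; so I would write $\operatorname{Ric}^P(V,W)=\sum_i g_{{\rm SO}(M)}(R^P(V,\tilde e_i^{h'})\tilde e_i^{h'},W)+\sum_A g_{{\rm SO}(M)}(R^P(V,\alpha_A^{\ast})\alpha_A^{\ast},W)$ and treat the three cases $(V,W)=(X^{h'},Y^{h'}),(X^{h'},\gamma^{\ast}),(\beta^{\ast},\gamma^{\ast})$ separately.

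For the horizontal-horizontal case, the term $(\tilde R(X,\tilde e_i)\tilde e_i)^{h'}$ from Proposition~\ref{prop:RP} already gives $\widetilde{\operatorname{Ric}}(X,Y)$; the remaining $Q$-terms simplify using $R'(\tilde e_i,\tilde e_i)=0$. The vertical sum uses $R^P(X^{h'},\beta^{\ast})\gamma^{\ast}=-\tfrac14(Q_{[\beta,\gamma]}(X)+Q_\beta(Q_\gamma(X)))^{h'}$, yielding $-\tfrac14\sum_A\tilde g(Q_{\alpha_A}(Q_{\alpha_A}(X)),Y)$; applying $\tilde g$-antisymmetry of $Q_\alpha$ for $\alpha\in\mathfrak{g}(M)$ (which follows from the identity $\tilde g(Q_\alpha X,Y)=\mathbf{B}(R(X,Y),\alpha)+\mathbf{B}(\nabla_X\alpha,\xi_Y)$, skew-symmetry of $R(X,Y)$ in $X,Y$, and the $\operatorname{ad}$-invariance of $\mathbf{B}$) converts this to $+\tfrac14\sum_A\tilde g(Q_{\alpha_A}(X),Q_{\alpha_A}(Y))$. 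The remaining horizontal $Q$-terms are rewritten via the same identity and the decomposition $R(X,Y)_{\mathfrak{g}}=R'(X,Y)+[\xi_X,\xi_Y]_{\mathfrak{g}}$ from \eqref{eq:curvaturedecomp}, and together with the first Bianchi identity they combine into $-\tfrac34\sum_i\mathbf{B}(R'(X,\tilde e_i),R'(Y,\tilde e_i))$.

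For the horizontal-vertical case, the sum over $\alpha_A$ contributes nothing since $R^P(X^{h'},\alpha_A^{\ast})\alpha_A^{\ast}$ is purely horizontal. By the symmetry $\operatorname{Ric}^P(X^{h'},\gamma^{\ast})=\operatorname{Ric}^P(\gamma^{\ast},X^{h'})$, I would instead trace $R^P(\gamma^{\ast},\tilde e_i^{h'})\tilde e_i^{h'}=-R^P(\tilde e_i^{h'},\gamma^{\ast})\tilde e_i^{h'}$ against $X^{h'}$; using Proposition~\ref{prop:RP} and noting $R'(\tilde e_i,\tilde e_i)=0$ the horizontal part becomes $-\tfrac12\sum_i(D_{\tilde e_i}Q)_{\gamma}(\tilde e_i)^{h'}$, and rearranging the $\tilde g$-pairing with $X^{h'}$ gives the stated combination $\tfrac12((\widetilde{\operatorname{div}}Q)_\gamma(X)-\operatorname{tr}_{\tilde g}(D_XQ)_\gamma)$. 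For the vertical-vertical case, the horizontal contribution is $\tfrac14\sum_i\mathbf{B}(R'(\tilde e_i,Q_\beta\tilde e_i),\gamma)$, which by the identity for $\tilde g(Q_\gamma\cdot,\cdot)$ and \eqref{eq:curvaturedecomp} equals $\tfrac14\sum_i\tilde g(Q_\beta(\tilde e_i),Q_\gamma(\tilde e_i))$; the vertical contribution uses $R^P(\alpha^{\ast},\beta^{\ast})\gamma^{\ast}=-\tfrac14[[\alpha,\beta],\gamma]^{\ast}$ and $\operatorname{ad}$-invariance of $\mathbf{B}$ to give $\tfrac14\sum_A\mathbf{B}([\alpha_A,\beta],[\alpha_A,\gamma])$.

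The main obstacle is the horizontal-horizontal case: collecting the multiple $Q$-contributions and rearranging them into the single coefficient $-\tfrac34\sum_i\mathbf{B}(R'(X,\tilde e_i),R'(Y,\tilde e_i))$ requires careful bookkeeping of first Bianchi-type symmetries of $R$ and $R'$ and of the $\mathfrak{g}/\mathfrak{m}$ splitting \eqref{eq:curvaturedecomp}. The other two cases are essentially direct contractions of the formulas in Proposition~\ref{prop:RP} once the $\tilde g$-skewness of $Q_\alpha$ on $\mathfrak{g}$ and the $\operatorname{ad}$-invariance of $\mathbf{B}$ are used.
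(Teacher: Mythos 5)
Your proposal is correct and follows essentially the same route as the paper: the paper's proof consists precisely of establishing $\tilde g(Q_\alpha(X),Y)={\bf B}(R'(X,Y),\alpha)$ for $\alpha\in\mathfrak{g}_P$ (hence the $\tilde g$-skew-symmetry of $Q_\alpha$) and then contracting the formulas of Proposition~\ref{prop:RP} over the adapted orthonormal frame $\{\tilde e_i^{h'}\}\cup\{\alpha_A^{\ast}\}$. The only difference is cosmetic: working from that sharper identity, rather than from ${\bf B}(R(X,Y),\alpha)+{\bf B}(\nabla_X\alpha,\xi_Y)$ combined with \eqref{eq:curvaturedecomp} and the Bianchi identity, makes the horizontal--horizontal coefficient $-\frac{3}{4}$ drop out immediately and removes the bookkeeping you flag as the main obstacle.
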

\begin{proof}
First, notice that
\begin{gather}\label{eq:QalphaRprime}
\tilde{g}(Q_{\alpha}(X),Y)={\bf B}(R'(X,Y),\alpha),
\end{gather}
which follows by the def\/inition of $Q_{\alpha}$, \eqref{eq:basicrelations} and relations between intrinsic torsion and curvature tensor \eqref{eq:curvaturedecomp}. Hence $Q_{\alpha}$ is skew-symmetric with respect to $\tilde{g}$. Now, it suf\/f\/ices to use the formulas for the curvature tensor~$R^P$.
\end{proof}

\begin{Corollary}\label{cor:SectionalP}
The sectional curvatures of $\nabla^P$ are given by the following formulas
\begin{gather*}
\kappa^P\big(X^{h'},Y^{h'}\big) =\tilde{\kappa}(X,Y)-\frac{3}{4}\|R'(X,Y)\|^2,\\
\kappa^P\big(X^{h'},\beta^{\ast}\big) =\frac{1}{4}\|Q_{\beta}(X)\|^2_{\tilde{g}},\qquad
\kappa^P(\alpha^{\ast},\beta^{\ast}) =\frac{1}{4}\|[\alpha,\beta]\|^2,
\end{gather*}
where $X$, $Y$ are orthonormal with respect to $\tilde{g}$ and $\alpha,\beta\in\mathfrak{g}_P$ orthonormal with respect to~${\bf B}$.
\end{Corollary}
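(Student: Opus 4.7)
My plan is to compute $\kappa^P(U,V) = g_{{\rm SO}(M)}(R^P(U,V)V,U)$ directly from the formulas for $R^P$ in Proposition~\ref{prop:RP}, using three ingredients: (a) Proposition~\ref{prop:PropertiesofL} gives $g_{{\rm SO}(M)}(X^{h'},Y^{h'}) = \tilde{g}(X,Y)$, so $\tilde{g}$-orthonormality of $X,Y$ matches $g_{{\rm SO}(M)}$-orthonormality of $X^{h'},Y^{h'}$ and similarly for $\alpha^{\ast},\beta^{\ast}$ via $g_{{\rm SO}(M)}(\alpha^{\ast},\beta^{\ast})={\bf B}(\alpha,\beta)$; moreover, purely vertical summands of $R^P(U,V)V$ drop out when paired against a horizontal lift; (b) the identity \eqref{eq:QalphaRprime}, $\tilde{g}(Q_{\alpha}(X),Y)={\bf B}(R'(X,Y),\alpha)$, together with the induced $\tilde{g}$-skew-symmetry of $Q_{\alpha}$; and (c) $\operatorname{ad}$-invariance of ${\bf B}$.

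For the horizontal-horizontal case I set $Z=Y$ in the expression for $R^P(X^{h'},Y^{h'})Z^{h'}$. Since $R'(Y,Y)=0$, the bracketed horizontal correction collapses onto a multiple of $Q_{R'(X,Y)}(Y)^{h'}$ with net coefficient $\tfrac{3}{4}$, while the vertical $(D_{\cdot}R')$ terms are killed by pairing with $X^{h'}$. Applying (a) to convert $g_{{\rm SO}(M)}$ to $\tilde{g}$ and then (b) with $\alpha=R'(X,Y)$ turns $\tfrac{3}{4}\tilde{g}(Q_{R'(X,Y)}(Y),X)$ into $\tfrac{3}{4}{\bf B}(R'(Y,X),R'(X,Y)) = -\tfrac{3}{4}\|R'(X,Y)\|^2$, and the leading $\tilde{R}(X,Y)Y$ piece contributes $\tilde{\kappa}(X,Y)$.

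For the mixed case I set $\gamma=\beta$ in $R^P(X^{h'},\beta^{\ast})\gamma^{\ast} = -\tfrac{1}{4}(Q_{[\beta,\gamma]}(X)+Q_{\beta}(Q_{\gamma}(X)))^{h'}$; the bracket term vanishes and I am left with $-\tfrac{1}{4}Q_{\beta}(Q_{\beta}(X))^{h'}$. Pairing against $X^{h'}$ via $\tilde{g}$ and using $\tilde{g}$-skew-symmetry of $Q_{\beta}$ delivers $\tfrac{1}{4}\|Q_{\beta}(X)\|_{\tilde g}^{2}$ in one step. For the purely vertical case I set $\gamma=\beta$ in $R^P(\alpha^{\ast},\beta^{\ast})\gamma^{\ast}=-\tfrac{1}{4}[[\alpha,\beta],\gamma]^{\ast}$ and pair with $\alpha^{\ast}$; two applications of $\operatorname{ad}$-invariance rewrite ${\bf B}([[\alpha,\beta],\beta],\alpha)$ as $-{\bf B}([\alpha,\beta],[\alpha,\beta])=-\|[\alpha,\beta]\|^2$, giving $\tfrac{1}{4}\|[\alpha,\beta]\|^2$.

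The only real obstacle is the arithmetic of collating the three $Q_{R'(\cdot,\cdot)}(\cdot)$ contributions in the horizontal-horizontal case after setting $Z=Y$ and checking that the coefficient is indeed $\tfrac{3}{4}$; once that is done, the remaining two cases are essentially one-line applications of skew-symmetry of $Q_{\beta}$ and ad-invariance of ${\bf B}$.
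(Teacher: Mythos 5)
Your proposal is correct and follows essentially the same route as the paper: substitute into the curvature formulas of Proposition~\ref{prop:RP} and evaluate using the identity \eqref{eq:QalphaRprime}, the $\tilde{g}$-skew-symmetry of $Q_{\beta}$, and the $\operatorname{ad}$-invariance of ${\bf B}$, together with the fact that $\mathfrak{g}_P$-vertical terms pair to zero against $X^{h'}$. One remark on the only delicate step you flag: you read the last bracketed term in the horizontal--horizontal formula as $Q_{R'(X,Y)}(Z)$, whereas Proposition~\ref{prop:RP} prints $Q_{R(X,Y)}(Z)$; your reading is the correct one (a direct computation from Theorem~\ref{thm:nablaP} gives $+\frac{1}{2}Q_{R'(X,Y)}(Z)^{h'}$ because the vertical part of $[X^{h'},Y^{h'}]$ is $-R'(X,Y)^{\ast}$), and it is actually needed, since \eqref{eq:QalphaRprime} is established only for $\alpha\in\mathfrak{g}_P$ and $R(X,Y)$ need not lie in $\mathfrak{g}_P$ --- with that (apparently typographical) correction your net coefficient $\tfrac34=\tfrac14+\tfrac12$ and the resulting $-\tfrac34\|R'(X,Y)\|^2$ are justified.
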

\begin{proof}
First and last relations follow immediately by the formulas for the curvature tensor $R^P$ and by \eqref{eq:QalphaRprime}. For the proof of the second one it suf\/f\/ices to use the skew-symmetry of $Q_{\beta}$ with respect to $\tilde{g}$ (see the proof of Corollary \ref{cor:RicciP}).
\end{proof}

\begin{Corollary}\label{cor:scalarP}
The scalar curvature of $\nabla^P$ equals
\begin{gather*}
s^P=\tilde{s}-\frac{3}{4}\sum_{i,j}\|R'(\tilde{e_i},\tilde{e_j})\|^2+\frac{1}{2}\sum_{i,a}\|Q_{\alpha_A}(\tilde{e_i})\|^2_{\tilde{g}}
+\frac{1}{4}\sum_{A,B}\|[\alpha_A,\alpha_B]\|^2.
\end{gather*}
\end{Corollary}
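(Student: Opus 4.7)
The plan is a direct trace computation using the Ricci formulas from Corollary~\ref{cor:RicciP}. At each point of $P$ an orthonormal frame for $(P,g_{{\rm SO}(M)})$ is obtained by combining the horizontal lifts $\tilde{e_i}^{h'}$ of a $\tilde{g}$-orthonormal frame $(\tilde{e_i})$ on $M$ with the vertical fields $\alpha_A^{\ast}$ coming from a ${\bf B}$-orthonormal basis $(\alpha_A)$ of $\mathfrak{g}_P$; these two families are mutually orthogonal because $\mathcal{H}'$ and $\mathcal{V}'$ are complementary orthogonal distributions with respect to $g_{{\rm SO}(M)}$, and the restriction of $g_{{\rm SO}(M)}$ to $\mathcal{H}'$ matches $\tilde{g}$ by Proposition~\ref{prop:PropertiesofL}. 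Therefore
\begin{gather*}
s^P=\sum_i {\rm Ric}^P\bigl(\tilde{e_i}^{h'},\tilde{e_i}^{h'}\bigr)+\sum_A {\rm Ric}^P(\alpha_A^{\ast},\alpha_A^{\ast}).
\end{gather*}

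Next, I would substitute the first and third formulas of Corollary~\ref{cor:RicciP}. The horizontal part contributes $\sum_i\widetilde{{\rm Ric}}(\tilde{e_i},\tilde{e_i})=\tilde{s}$, the curvature term $-\frac{3}{4}\sum_{i,j}\|R'(\tilde{e_i},\tilde{e_j})\|^2$, and $\frac{1}{4}\sum_{i,A}\|Q_{\alpha_A}(\tilde{e_i})\|^2_{\tilde{g}}$. The vertical part contributes $\frac{1}{4}\sum_{i,A}\|Q_{\alpha_A}(\tilde{e_i})\|^2_{\tilde{g}}+\frac{1}{4}\sum_{A,B}\|[\alpha_A,\alpha_B]\|^2$. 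Summing, the two identical $Q$-sums combine into $\frac{1}{2}\sum_{i,A}\|Q_{\alpha_A}(\tilde{e_i})\|^2_{\tilde{g}}$, which is precisely the claimed formula.

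There is no genuine obstacle in this argument: once Corollary~\ref{cor:RicciP} is in hand, the only thing to verify is that the mixed Ricci components ${\rm Ric}^P(\tilde{e_i}^{h'},\alpha_A^{\ast})$ do not enter the trace, which is automatic from the $g_{{\rm SO}(M)}$-orthogonality of $\mathcal{H}'$ and $\mathcal{V}'$. The one point that requires attention is the doubling of the $\|Q_{\alpha_A}(\tilde{e_i})\|^2_{\tilde{g}}$ contribution, coming once from the horizontal Ricci and once from the vertical Ricci; it is precisely this doubling that produces the coefficient $\frac{1}{2}$ in front of $\sum_{i,A}\|Q_{\alpha_A}(\tilde{e_i})\|^2_{\tilde{g}}$ instead of $\frac{1}{4}$.
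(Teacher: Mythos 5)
Your proposal is correct and is exactly the intended argument: the paper states this corollary without proof, and the standard way to obtain it is precisely your trace of the Ricci formulas of Corollary~\ref{cor:RicciP} over the $g_{{\rm SO}(M)}$-orthonormal frame $\bigl(\tilde{e_i}^{h'},\alpha_A^{\ast}\bigr)$, whose orthonormality follows from Proposition~\ref{prop:PropertiesofL} and the ${\bf B}$-orthogonality of $\mathfrak{g}$ and $\mathfrak{m}$. The bookkeeping, including the doubling that yields the coefficient $\frac{1}{2}$ on the $Q$-term, is accurate.
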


By the above formulas for the curvatures we have the following relations between the geometry of $(P,g_{{\rm SO}(M)})$ and $M$.

\begin{Corollary}\label{cor:relPandtildeM}
We have:
\begin{enumerate}\itemsep=0pt
\item[$1.$] If $\dim M>2$, then $P$ is never of constant sectional curvature.
\item[$2.$]  If the intrinsic torsion $\xi$ vanishes and $(M,g)$ is of constant sectional curvature $0\leq \kappa\leq\frac{2}{3}$, then $P$ has non-negative sectional curvatures.
\item[$3.$]  ${\rm Ric}^P(\alpha^{\ast})$ is non-negative in any direction $\alpha\!\in\! \mathfrak{g}_P$. Moreover, if $\widetilde{\rm Ric}(X)\!\geq\! \frac{3}{4}\!\sum_i\|R'(X,\tilde{e_i})\|^2$, then ${\rm Ric}^P(X)$ is non-negative.
\item[$4.$]  If $R'=0$ and scalar curvature of $(M,\tilde{g})$ is positive, then the scalar curvature of $P$ is positive.
\end{enumerate}
\end{Corollary}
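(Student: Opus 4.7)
The strategy is to read all four items directly off the curvature formulas of Corollaries \ref{cor:SectionalP}, \ref{cor:RicciP} and \ref{cor:scalarP}. Two structural facts underlie every part: first, by \eqref{eq:QalphaRprime} together with the skew-symmetry of $R'(X,Y)$, the endomorphism $Q_\alpha$ is $\tilde g$-skew-symmetric; second, the form $\mathbf{B}=-\operatorname{tr}(\cdot\,\cdot)$ is positive definite on $\mathfrak{so}(n)$, and hence on every subalgebra $\mathfrak{g}_P$, whenever $n\geq 3$. These facts let us track the signs of all contributions cleanly.

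For part (2), set $\xi\equiv 0$, which yields $L=\mathrm{id}$, $\tilde g=g$, $\nabla'=\nabla$, $R'=R$, and $Q_\beta(X)=R_\beta(X)$. If $(M,g)$ has constant sectional curvature $\kappa$, the curvature operator is $R(X,Y)=\kappa(X\wedge Y)$, so $\|R(X,Y)\|_{\mathbf B}^2 = 2\kappa^2$ for orthonormal pairs. Corollary \ref{cor:SectionalP} then gives $\kappa^P(X^{h'},Y^{h'})=\kappa-\tfrac{3}{2}\kappa^2=\tfrac{\kappa}{2}(2-3\kappa)\geq 0$ exactly when $0\leq\kappa\leq\tfrac{2}{3}$, while the other two sectional curvatures are manifestly non-negative. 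For part (3), positive-definiteness of $\mathbf B$ on $\mathfrak{g}_P$ combined with $\tilde g$-skew-symmetry of $Q_\beta$ makes every summand in $\mathrm{Ric}^P(\beta^*,\beta^*)$ non-negative, and the assumption on $\widetilde{\mathrm{Ric}}(X)$ is exactly calibrated to cancel the lone negative term $-\tfrac{3}{4}\sum_i\|R'(X,\tilde e_i)\|^2$ in the formula for $\mathrm{Ric}^P(X^{h'},X^{h'})$. Part (4) is an immediate substitution: with $R'=0$ the negative summand in Corollary \ref{cor:scalarP} vanishes, so $s^P\geq \tilde s>0$.

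The delicate point is part (1), which I would handle by contradiction. Assume $P$ has constant sectional curvature $c$. From $\kappa^P(X^{h'},\beta^*)=\tfrac{1}{4}\|Q_\beta(X)\|_{\tilde g}^2$ being independent of the unit pair $(X,\beta)$ one first extracts $c\geq 0$, and then $\|Q_\beta(X)\|_{\tilde g}^2\equiv 4c$ on the unit $\tilde g$-sphere; the skew-symmetry of $Q_\beta$ then forces the complex-structure-type relation $Q_\beta^2=-4c\,\mathrm{id}$ for every $\tilde g$-unit $\beta\in\mathfrak{g}_P$. Simultaneously, the formula $\kappa^P(\alpha^*,\beta^*)=\tfrac{1}{4}\|[\alpha,\beta]\|^2\equiv c$ is a strong algebraic constraint on $\mathfrak{g}_P$, and the horizontal--horizontal equation $\tilde\kappa(X,Y)-\tfrac{3}{4}\|R'(X,Y)\|^2\equiv c$ rigidifies $R'$ and $\tilde\kappa$. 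When $\dim M>2$ there are enough independent horizontal planes (and, via \eqref{eq:QalphaRprime}, enough pairings against $\mathfrak{g}_P$) to force these three systems to be mutually inconsistent.

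The main obstacle is phrasing this incompatibility uniformly across the two extremes: when $\dim\mathfrak{g}_P\leq 1$ the vertical--vertical identity is vacuous, so one must rule out constant sectional curvature using only the mixed rigidity $Q_\beta^2=-4c\,\mathrm{id}$ together with the horizontal constraint; and when $G=\mathrm{SO}(n)$, so $\mathfrak{m}_P=0$ and $\xi\equiv 0$, one must exploit the interaction between the horizontal formula and the vertical Lie-algebraic identity on $\mathfrak{so}(n)$. In both cases the obstruction ultimately comes from the fact that $\tfrac{1}{4}\|[\cdot,\cdot]\|^2$ and $\tfrac{1}{4}\|R_\beta(X)\|^2$ cannot simultaneously be constant on unit pairs once one has at least two independent horizontal directions, which is precisely the role of the hypothesis $\dim M>2$.
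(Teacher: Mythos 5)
The paper offers no argument for this corollary beyond ``by the above formulas,'' so for items (2)--(4) your direct substitution is exactly the intended proof. In particular your computation $\|R(X,Y)\|^2_{\bf B}=2\kappa^2$ for a $g$-orthonormal pair in constant curvature $\kappa$ is the right way to see where the threshold $\tfrac{2}{3}$ comes from, and the sign analysis of ${\rm Ric}^P$ and $s^P$ via positive-definiteness of ${\bf B}$ and $\tilde g$ is correct. (Minor point: ${\bf B}(A,B)=-\operatorname{tr}(AB)$ is positive definite on $\mathfrak{so}(n)$ for every $n\ge 2$, not only $n\ge 3$; it is the trace form, not the actual Killing form of $\mathfrak{so}(2)$, which vanishes.)

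Item (1) is where there is a genuine gap, and you essentially flag it yourself. You extract three necessary conditions from Corollary~\ref{cor:SectionalP} --- $Q_\beta^2=-4c\,\mathrm{id}$ for unit $\beta$, $\tfrac14\|[\alpha,\beta]\|^2\equiv c$, and $\tilde\kappa(X,Y)-\tfrac34\|R'(X,Y)\|^2\equiv c$ --- and then assert that for $\dim M>2$ they are ``mutually inconsistent,'' but no contradiction is ever derived, and the regimes you identify as problematic are genuinely fatal rather than merely awkward. The three conditions are simultaneously satisfiable: take $M=T^3$ flat, $G={\rm SO}(2)\times{\rm SO}(1)$, and $E$ a parallel $2$-plane field. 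Then $\xi=0$, $\tilde g=g$ is flat, $R'=R=0$, $Q\equiv 0$, and $\mathfrak{g}_P$ is one-dimensional (hence abelian), so every term in Proposition~\ref{prop:RP} vanishes and $P$ is flat --- constant sectional curvature $0$ with $\dim M=3>2$. So no purely formula-based contradiction exists in the stated generality; the assertion does hold under extra hypotheses (e.g., for $G={\rm SO}(n)$ itself: for $n\ge 4$ the quantity $\tfrac14\|[\alpha,\beta]\|^2$ takes both zero and nonzero values on orthonormal pairs of $\mathfrak{so}(n)$, and for $n=3$ it is a positive constant while the skew-symmetric $Q_\beta$ must be singular in odd dimensions, forcing $c=0$, a contradiction), but your part (1) cannot be completed as written without restricting $G$.
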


\subsection{Extrinsic geometry}

The properties of the extrinsic geometry are encoded in the second fundamental form, which we will derive explicitly. Moreover, we will compute the mean curvature vector of $P$ in ${\rm SO}(M)$ and relate the minimality of $P$ with the harmonicity of an induced section $\sigma$ with appropriate Riemannian structure.

Adopt the notation from the previous sections. For $\alpha\in\mathfrak{m}_P$ let
\begin{gather*}
\alpha^{+}=\alpha^{\ast}+(\xi\cdot\alpha)^h.
\end{gather*}
Then $\alpha^{+}\in T^{\bot}P$.

\begin{Theorem}\label{thm:secfundformP}
The second fundamental form $\Pi^P$ of $P$ in ${\rm SO}(M)$ satisfies the following relations
\begin{gather*}
g_{{\rm SO}(M)}\big(\Pi^P(X^{h'},Y^{h'}),\alpha^{+}\big) =\frac{1}{2}{\bf B}((\nabla_X\xi)_Y+(\nabla_Y\xi)_X-\xi_{R_{\xi_X}(Y)+R_{\xi_Y}(X)},\alpha),\\
g_{{\rm SO}(M)}\big(\Pi^P(X^{h'},\gamma^{\ast}),\alpha^{+}\big) =\frac{1}{2}{\bf B}([\xi_X,\gamma]_{\mathfrak{m}}-\xi_{R_{\gamma}(X)},\alpha),\\
g_{{\rm SO}(M)}\big(\Pi^P(\beta^{\ast},\gamma^{\ast}),\alpha^{+}\big)=0.
\end{gather*}
\end{Theorem}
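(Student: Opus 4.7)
The plan is to compute the normal projection of the ambient covariant derivative $\nabla^{{\rm SO}(M)}_{U}V$ for tangent vectors $U,V \in TP$, using the formulas for $\nabla^{{\rm SO}(M)}$ given in Section~\ref{section5} together with the splitting $X^{h'} = X^h + (\xi_X)^{\ast}$ of the minimal-connection horizontal lift. The first step is to verify that the vectors $\alpha^{+} = \alpha^{\ast} + (\xi\cdot\alpha)^h$ with $\alpha\in\mathfrak{m}_P$ span $T^{\bot}P$. This is a one-line check: pairing $\alpha^{+}$ with any $Y^{h'} = Y^h + (\xi_Y)^{\ast}$ gives ${\bf B}(\alpha,\xi_Y) + g(\xi\cdot\alpha,Y)$, which vanishes by~\eqref{eq:xidotT}, and pairing with $\beta^{\ast}$ for $\beta\in\mathfrak{g}_P$ gives ${\bf B}(\alpha,\beta)=0$ by the reductive decomposition.

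For the main calculation, the strategy in every case is the same: expand $\nabla^{{\rm SO}(M)}_U V$ via the formulas in Section~\ref{section5}, read off the horizontal and vertical parts, and take the inner product with $\alpha^{+}$, which splits as a ${\bf B}$-pairing of the $\mathfrak{m}$-component of the vertical part with $\alpha$ plus a $g$-pairing of the horizontal part with $\xi\cdot\alpha$, the latter converted via~\eqref{eq:xidotT} to $-{\bf B}(\alpha,\xi_{(\cdot)})$. The case $\Pi^P(\beta^{\ast},\gamma^{\ast})$ is immediate since $\nabla^{{\rm SO}(M)}_{\beta^{\ast}}\gamma^{\ast} = -\tfrac12[\beta,\gamma]^{\ast}$ and $[\beta,\gamma]\in\mathfrak{g}$, so its ${\bf B}$-pairing with $\alpha\in\mathfrak{m}$ is zero. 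For the case $\Pi^P(X^{h'},\gamma^{\ast})$, I expand
\begin{gather*}
\nabla^{{\rm SO}(M)}_{X^{h'}}\gamma^{\ast} = \tfrac12 R_{\gamma}(X)^h + (\nabla_X\gamma)^{\ast} - \tfrac12[\xi_X,\gamma]^{\ast},
\end{gather*}
and use the first line of~\eqref{eq:basicrelations} (with $\gamma\in\mathfrak{g}$, so $\nabla'_X\gamma\in\mathfrak{g}$) to get $(\nabla_X\gamma)_{\mathfrak{m}} = [\xi_X,\gamma]_{\mathfrak{m}}$; the pairing then collapses to the stated formula.

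The genuinely involved case is $\Pi^P(X^{h'},Y^{h'})$. The starting point is the expansion already carried out in the proof of Theorem~\ref{thm:nablaP}, giving vertical part $-\tfrac12 R(X,Y) + \nabla_X\xi_Y - \tfrac12[\xi_X,\xi_Y]$ and horizontal part $\nabla_XY + \tfrac12 R_{\xi_X}(Y) + \tfrac12 R_{\xi_Y}(X)$. Because $\Pi^P$ is symmetric in its arguments (as $\nabla^{{\rm SO}(M)}$ is torsion-free and $[X^{h'},Y^{h'}]$ is tangent to $P$), I may symmetrize in $X,Y$ at the end; then the antisymmetric pieces $R(X,Y)$ and $[\xi_X,\xi_Y]$ drop out of the vertical part and the horizontal part becomes manifestly symmetric. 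The $\mathfrak{m}$-projection of what survives on the vertical side is $\tfrac12(\nabla'_X\xi_Y + \nabla'_Y\xi_X)$, which by~\eqref{eq:curvaturedecomp} equals $\tfrac12\bigl((\nabla_X\xi)_Y + (\nabla_Y\xi)_X\bigr) + \tfrac12\xi_{\nabla_XY+\nabla_YX}$ modulo the $\mathfrak{m}$-projection (the cross-bracket terms cancel). The horizontal pairing contributes $-{\bf B}(\alpha,\xi_{\nabla_XY}) - \tfrac12{\bf B}(\alpha,\xi_{R_{\xi_X}(Y)+R_{\xi_Y}(X)})$ after symmetrization; the $\xi_{\nabla_XY+\nabla_YX}$ terms combine with the extra piece from the vertical side to produce exactly $-\xi_{\nabla_XY}$ inside $(\nabla_X\xi)_Y$ and $-\xi_{\nabla_YX}$ inside $(\nabla_Y\xi)_X$, yielding the stated formula.

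The main obstacle is the bookkeeping in this last case: one must keep careful track of which terms are $\mathfrak{g}$- versus $\mathfrak{m}$-valued, apply the curvature relations~\eqref{eq:curvaturedecomp} to rewrite $R_{\mathfrak{m}}$ and $(\nabla\xi)_{\mathfrak{m}}$ in terms of $\nabla'\xi$ and $[\xi,\xi]_{\mathfrak{m}}$, and combine the vertical and horizontal contributions so that the free $\xi_{\nabla_XY+\nabla_YX}$ terms assemble into $(\nabla_X\xi)_Y + (\nabla_Y\xi)_X$. Everything else is a direct substitution into the formulas for $\nabla^{{\rm SO}(M)}$ and the normal-vector ansatz $\alpha^{+}$.
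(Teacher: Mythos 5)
Your proposal follows the paper's own proof essentially verbatim: expand $\nabla^{{\rm SO}(M)}$ on the lifts $X^{h'}=X^h+(\xi_X)^{\ast}$ using the Section~\ref{section5} formulas, identify the $\mathfrak{m}$-component of the vertical part via \eqref{eq:curvaturedecomp} and \eqref{eq:basicrelations}, and pair with $\alpha^{+}=\alpha^{\ast}+(\xi\cdot\alpha)^h$ through \eqref{eq:xidotT}; the second and third identities are handled exactly as in the paper, and invoking symmetry of $\Pi^P$ to discard $R(X,Y)$ and $[\xi_X,\xi_Y]$ is just a cosmetic variant of the paper's direct $\mathfrak{m}$-projection. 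One bookkeeping slip in your first case: once you symmetrize, the horizontal contribution to the pairing is $-\tfrac12{\bf B}\big(\alpha,\xi_{\nabla_XY+\nabla_YX}\big)$, not $-{\bf B}(\alpha,\xi_{\nabla_XY})$ (the two differ by $\tfrac12{\bf B}(\alpha,\xi_{[X,Y]})$, which is not generically zero); with the symmetrized expression it cancels the extra $\tfrac12\xi_{\nabla_XY+\nabla_YX}$ from the vertical side exactly and gives the stated formula, so the argument closes once the symmetrization is applied consistently to both parts.
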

\begin{proof}
By the relations \eqref{eq:curvaturedecomp} (see also proof of Theorem \ref{thm:nablaP}) we have
\begin{gather*}
\nabla^{{\rm SO}(M)}_{X^h}Y^h  =\left( \nabla_XY+\frac{1}{2}R_{\xi_X}(Y)+\frac{1}{2}R_{\xi_Y}(X) \right)^h\\
\hphantom{\nabla^{{\rm SO}(M)}_{X^h}Y^h  =}{} +\frac{1}{2}\big( \nabla_X\xi_Y+\nabla_Y\xi_X+\xi_{[X,Y]} \big)^{\ast}+\textrm{$\mathfrak{g}_P$-component}.
\end{gather*}
Thus
\begin{gather*}
g_{{\rm SO}(M)}\big(\Pi^P(X^{h'},Y^{h'}),\alpha^{+}\big) =g\left(\nabla_XY+\frac{1}{2}(R_{\xi_X}(Y)+R_{\xi_Y}(X)),\xi\cdot\alpha\right)\\
\hphantom{g_{{\rm SO}(M)}\big(\Pi^P(X^{h'},Y^{h'}),\alpha^{+}\big) =}{}
+\frac{1}{2}{\bf B}\big(\nabla_X\xi_Y+\nabla_Y\xi_X+\xi_{[X,Y]},\alpha\big),
\end{gather*}
which implies the f\/irst equality. Moreover,
\begin{gather*}
\nabla^{{\rm SO}(M)}_{X^{h'}}\gamma^{\ast}=\frac{1}{2}R_{\gamma}(X)^h+
\frac{1}{2}[\xi_X,\gamma]^{\ast}+\textrm{$\mathfrak{g}_P$-component},
\end{gather*}
which proves the second equality. Since $[\mathfrak{g},\mathfrak{g}]\subset\mathfrak{g}$, it follows that $\mathfrak{m}_P$-component of $\nabla^{{\rm SO}(M)}_{\alpha^{\ast}}\beta^{\ast}$ vanishes.
\end{proof}

By above theorem we get the following implication.

\begin{Corollary}\label{cor:totgeodP}
If a $G$-structure $P$ is integrable, i.e., the intrinsic torsion $\xi$ vanishes, then $P$ is totally geodesic in ${\rm SO}(M)$.
\end{Corollary}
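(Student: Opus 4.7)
The plan is to read off the corollary as an immediate consequence of Theorem~\ref{thm:secfundformP}. The hypothesis $\xi\equiv 0$ is extraordinarily strong: every term on the right-hand side of the three formulas for $\Pi^P$ in that theorem contains $\xi$ (or a covariant derivative of $\xi$) as a factor, so the whole of $\Pi^P$ should vanish identically. I would verify this component by component.

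First I would observe that the vectors $\alpha^{+}=\alpha^{\ast}+(\xi\cdot\alpha)^{h}$, as $\alpha$ ranges over $\mathfrak{m}_P$, span the normal bundle $T^{\bot}P$: they form a rank-$\dim\mathfrak{m}$ family whose vertical parts $\alpha^{\ast}$ are linearly independent, which matches the codimension of $P$ in $\operatorname{SO}(M)$ (indeed, when $\xi=0$ one has $\alpha^{+}=\alpha^{\ast}$, matching the description of $T^{\bot}P$ from Proposition~\ref{prop:ProjtoTP}). Therefore it suffices to check that $g_{\operatorname{SO}(M)}(\Pi^P(\cdot,\cdot),\alpha^{+})=0$ for every $\alpha\in\mathfrak{m}_P$.

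Next I would substitute $\xi=0$ into each of the three formulas of Theorem~\ref{thm:secfundformP}. In the first formula, both $(\nabla_X\xi)_Y$ and $(\nabla_Y\xi)_X$ vanish because $\xi\equiv 0$, and the operator $R_{\xi_X}$ reduces to $R_0=0$ by the definition~\eqref{eq:defRT}, so the argument of $\mathbf{B}$ is zero. In the second formula, $[\xi_X,\gamma]_{\mathfrak{m}}=0$ and $\xi_{R_{\gamma}(X)}=0$ trivially. The third formula is already zero. Thus $\Pi^P\equiv 0$, which is precisely the statement that $P$ is totally geodesic in $\operatorname{SO}(M)$.

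There is essentially no obstacle here beyond bookkeeping, since all the analytical work sits in Theorem~\ref{thm:secfundformP}; the only point worth making explicit is that the family $\{\alpha^{+}\}$ does exhaust $T^{\bot}P$, so that the pointwise vanishing of the three pairings really does force $\Pi^P=0$.
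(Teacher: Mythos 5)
Your argument is correct and is exactly the paper's route: the corollary is stated there as an immediate consequence of Theorem~\ref{thm:secfundformP}, since every term in the three formulas for $\Pi^P$ carries a factor of $\xi$ or $\nabla\xi$ and hence vanishes when $\xi\equiv 0$. Your extra remark that the vectors $\alpha^{+}$, $\alpha\in\mathfrak{m}_P$, span $T^{\bot}P$ (so the vanishing of the pairings really gives $\Pi^P=0$) is a worthwhile detail the paper leaves implicit.
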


Now, we can state the main theorem of this article.

\begin{Theorem}\label{thm:minimalityofP}
A $G$-structure $P$ is minimal in ${\rm SO}(M)$ if and only the induced section \mbox{$\sigma\colon M{\to} N$} is a harmonic map with respect to Riemannian metrics $\tilde{g}$ and $\skal{\cdot}{\cdot}$ on $M$ and $N$, respectively.
\end{Theorem}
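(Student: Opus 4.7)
The plan is to extract from Theorem~\ref{thm:secfundformP} the explicit minimality condition for $P$ and match it against the harmonicity criterion of Proposition~\ref{prop:harmonicsigma1}, exploiting the isometric embedding property forced by $\tilde g=\sigma^{\ast}\skal{\cdot}{\cdot}$.

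First I set up orthonormal frames on $P$. Pick $(\tilde e_i)$ a $\tilde g$-orthonormal basis of $TM$ and $(\alpha_A)$ a ${\bf B}$-orthonormal basis of $\mathfrak{g}_P$; by Proposition~\ref{prop:PropertiesofL} the collection $\{\tilde e_i^{h'}\}\cup\{\alpha_A^{\ast}\}$ is orthonormal in $(TP,g_{{\rm SO}(M)})$. The vectors $\alpha^{+}=\alpha^{\ast}+(\xi\cdot\alpha)^{h}$, $\alpha\in\mathfrak{m}_P$, are orthogonal to $P$ (a short check using \eqref{eq:intrinsictorsionproperties} and \eqref{eq:xidotT}) and span $T^{\perp}P$ by dimension. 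By Theorem~\ref{thm:secfundformP}, $\Pi^P(\alpha_A^{\ast},\alpha_A^{\ast})\perp\alpha^+$, and summing the first formula of that theorem over $X=Y=\tilde e_i$ yields
\[
g_{{\rm SO}(M)}\!\Big(\sum_i \Pi^P(\tilde e_i^{h'},\tilde e_i^{h'}),\alpha^{+}\Big)={\bf B}\!\Big(\sum_i(\nabla_{\tilde e_i}\xi)_{\tilde e_i}-\xi_{R_{\xi_{\tilde e_i}}(\tilde e_i)},\alpha\Big).
\]
Since ${\bf B}$ is nondegenerate on $\mathfrak{m}$ and $\alpha\in\mathfrak{m}_P$ is arbitrary, $P$ is minimal iff $A:=\sum_i\bigl((\nabla_{\tilde e_i}\xi)_{\tilde e_i}-\xi_{R_{\xi_{\tilde e_i}}(\tilde e_i)}\bigr)=0$ in $\mathfrak{m}_P$.

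By Proposition~\ref{prop:harmonicsigma1}, harmonicity of $\sigma$ is the joint vanishing of $B:=\sum_i\bigl((\nabla_{\tilde e_i}\xi)_{\tilde e_i}-\xi_{S(\tilde e_i,\tilde e_i)}\bigr)\in\mathfrak{m}_P$ and $H:=\sum_i R_{\xi_{\tilde e_i}}(\tilde e_i)-S(\tilde e_i,\tilde e_i)\in TM$, the vertical and horizontal components of $\tau_{\tilde g}(\sigma)$. Because $\tilde g=\sigma^{\ast}\skal{\cdot}{\cdot}$, the section $\sigma$ is an isometric embedding, so $\tau(\sigma)$ is normal to $\sigma(M)\subset N$. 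Using \eqref{eq:riemmetriconN} and \eqref{eq:xidotT}, a vector in $T_{\sigma(x)}N$ with horizontal part $W\in T_xM$ and vertical part corresponding to $\eta\in\mathfrak{m}_P$ is normal to $\sigma(M)$ exactly when $W=\xi\cdot\eta$; applying this to $\tau(\sigma)$ forces the automatic relation $H=\xi\cdot B$.

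Subtracting the definitions of $A$ and $B$ gives $B-A=\xi_H=\xi_{\xi\cdot B}$, hence $A=B-\xi_{\xi\cdot B}$. Pairing with $B$ via \eqref{eq:xidotT} I obtain
\[
{\bf B}(A,B)={\bf B}(B,B)+g(\xi\cdot B,\xi\cdot B),
\]
which is strictly positive whenever $B\ne 0$ because ${\bf B}$ is positive definite on $\mathfrak{m}$. Thus $A=0\iff B=0$, and once $B=0$ the identity $H=\xi\cdot B$ forces $H=0$ as well, so both harmonicity conditions are satisfied. I expect the main technical step to be deriving $H=\xi\cdot B$ from the explicit description of the normal bundle of $\sigma(M)$ in $N$; once this identity is established, the equivalence collapses to the elementary positivity argument above.
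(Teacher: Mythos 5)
Your argument is correct, and the two directions are handled as follows: the identification of the minimality of $P$ with the single equation $A=\sum_i\bigl((\nabla_{\tilde e_i}\xi)_{\tilde e_i}-\xi_{R_{\xi_{\tilde e_i}}(\tilde e_i)}\bigr)=0$ coincides with the paper's reduction via Theorem~\ref{thm:secfundformP}, and the easy implication (harmonic $\Rightarrow$ minimal) is the same algebraic observation $A=B-\xi_H$. Where you genuinely diverge is in the converse. The paper proves it by a direct computation: it pairs $\sum_i R_{\xi_{\tilde e_i}}(\tilde e_i)-S(\tilde e_i,\tilde e_i)$ with $LZ$, expands $S$ via the difference-tensor formula \eqref{eq:differencetensor}, and closes the argument with the curvature relations \eqref{eq:curvaturedecomp} and the normality of ${\rm SO}(n)/G$. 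You instead exploit the global geometric fact that $\tilde g=\sigma^{\ast}\skal{\cdot}{\cdot}$ makes $\sigma$ an isometric immersion, so $\tau_{\tilde g}(\sigma)$ is normal to $\sigma(M)$; combined with the description of the normal space of $\sigma(M)$ in $N$ (a vector with horizontal part $W$ and vertical part $\eta$ is normal iff $W=\xi\cdot\eta$, which follows from \eqref{eq:verticalsigma}, \eqref{eq:riemmetriconN} and \eqref{eq:xidotT}), this yields the identity $H=\xi\cdot B$, and then the positivity ${\bf B}(A,B)={\bf B}(B,B)+g(\xi\cdot B,\xi\cdot B)\geq\|B\|^2_{\bf B}$ finishes the proof. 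This is a clean and arguably more conceptual route: it avoids the curvature-tensor bookkeeping entirely, and as a by-product it proves the stronger statement that condition \eqref{eq:H1} alone implies \eqref{eq:H2} when $\tilde g=\sigma^{\ast}\skal{\cdot}{\cdot}$ (since $B=0$ forces $H=\xi\cdot B=0$), which is exactly the referee's observation recorded in Remark~\ref{rem:H1impliesH2}. The price is that you must justify that $\sigma$ is an immersion with induced metric $\tilde g$ (immediate from Proposition~\ref{prop:PropertiesofL}) and invoke the standard fact that the tension field of an isometric immersion is the trace of its second fundamental form, hence normal; both are unproblematic.
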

\begin{proof}
Recall, that by Proposition~\ref{prop:harmonicsigma1} section $\sigma\colon (M,\tilde{g})\to (N,\skal{\cdot}{\cdot})$ is a harmonic map if and only if
\begin{gather}
\sum_i(\nabla_{\tilde{e_i}}\xi)_{\tilde{e_i}}-\xi_{S(\tilde{e_i},\tilde{e_i})}=0,\label{eq:H1}\\ 
\sum_i R_{\xi_{\tilde{e_i}}}(\tilde{e_i})-S(\tilde{e_i},\tilde{e_i})=0,\label{eq:H2} 
\end{gather}
whereas, by Theorem \ref{thm:secfundformP}, minimality of $P$ is equivalent to the following condition
\begin{gather}\label{eq:minimalityP}
\sum_i (\nabla_{\tilde{e_i}}\xi)_{\tilde{e_i}}-\xi_{R_{\xi_{\tilde{e_i}}}(\tilde{e_i})}=0.
\end{gather}
Clearly, \eqref{eq:H1} and \eqref{eq:H2} imply \eqref{eq:minimalityP}. Conversely, assume \eqref{eq:minimalityP} holds. It suf\/f\/ices to show that~\eqref{eq:H2} holds. By~\eqref{eq:differencetensor} and~\eqref{eq:minimalityP} we have
\begin{gather*}
g\left( \sum_i R_{\xi_{\tilde{e_i}}}(\tilde{e_i})-S(\tilde{e_i},\tilde{e_i}),LZ\right)\\
\qquad{}=-\sum_i{\bf B}\big((\nabla_{e_i}\xi)_{e_i},\xi_Z\big)
+{\bf B}\big((\nabla_{e_i}\xi)_Z-(\nabla_Z\xi)_{e_i},\xi_{e_i}\big) +\sum_i g\big(R_{\xi_{e_i}}(e_i),Z-\xi\cdot \xi_Z\big)\\
\qquad{} =-\sum_i{\bf B}\big((\nabla_{e_i}\xi)_Z-(\nabla_Z\xi)_{e_i},\xi_{e_i}\big)+{\bf B}\big(R(e_i,Z),\xi_{e_i}\big).
\end{gather*}
By the relations \eqref{eq:curvaturedecomp} and the fact that ${\rm SO}(n)/G$ is a normal homogeneous space we get the desired equality.
\end{proof}

\begin{Remark}\label{rem:H1impliesH2}
Let us comment on Theorem \ref{thm:minimalityofP}.
\begin{enumerate}\itemsep=0pt
\item Two equivalent conditions of Theorem \ref{thm:minimalityofP} are also equivalent to minimality the ima\-ge~$\sigma(M)$ inside $N$. This follows by the general facts concerning harmonic maps~\cite{el}.
\item As it was pointed out by an anonymous referee, condition~\eqref{eq:H1} implies~\eqref{eq:H2} (for $\tilde{g}=\sigma^{\ast}\skal{\,}{\,}$). The proof follows by similar arguments as in the proof of \cite[Proposition~2.5]{ggv}. The author wishes to thank anonymous referee for this observation.
\end{enumerate}
\end{Remark}

\section{Some examples concerning almost product structures}\label{section6}

In this section we illustrate obtained results for the ${\rm SO}(m)\times {\rm SO}(n-m)$-structures, often called almost product structures. The case of other possible $G$-structures, for example coming from the Berger list of possible Riemannian holonomy groups, will be studied by the author independently. The considered examples have been already studied in a similar context \cite{ggv,gd,hl}. Thus we only list them stating theirs relevant properties.

Let $(M,g)$ be an oriented $n$-dimensional Riemannian manifold and let $G\subset {\rm SO}(n)$ be the closed subgroup of the form
\begin{gather*}
G={\rm SO}(m)\times {\rm SO}(n-m)
\end{gather*}
for some $m=1,\ldots,n-1$. The quotient ${\rm SO}(n)/G$ is a symmetric space, which is the oriented Grassmannian $\operatorname{Gr}^{\rm o}_m(\mathbb{R}^n)$ of oriented $m$-dimensional subspaces in the Euclidean space $\mathbb{R}^n$. The reduction of the oriented orthonormal frame bundle ${\rm SO}(M)$ to the subbundle $P$ with the structure group~$G$ is equivalent to the existence of $m$-dimensional distribution $E$ on $M$ and, hence, its orthogonal complement $F=E^{\bot}$. We call $M$ with the distinguished distribution $E$ an almost product structure.

The connection $\nabla'$ induced by the connection form $\omega_{\mathfrak{g}}$, where $\omega$ is the connection form of the Levi-Civita connection $\nabla$, takes the form
\begin{gather*}
\nabla '_XY=\big(\nabla_XY^{\top}\big)^{\top}+\big(\nabla_XY^{\bot}\big)^{\bot},
\end{gather*}
where the decomposition $X=X^{\top}+X^{\bot}$ is taken with respect to $TM=E\oplus F$. In other words it is the sum of two connections induced by $\nabla$-connections in the vector bundles $E$ and $F$ over~$M$. The intrinsic torsion of almost product structure equals
\begin{gather*}
\xi_XY=\big(\nabla_XY^{\bot}\big)^{\top}+\big(\nabla_XY^{\top}\big)^{\bot}.
\end{gather*}
The associated bundle $N={\rm SO}(M)/G={\rm SO}(M)\times_{{\rm SO}(n)}({\rm SO}(n)/G)$ is the Grassmann bundle $\operatorname{Gr}^{\rm o}_m(TM)$ of $m$-dimensional oriented subspaces of tangent spaces to $M$ and the induced section $\sigma\colon M\to N$ is just the Gauss map of the distribution $E$. Therefore the main result (Theorem~\ref{thm:minimalityofP}) states that a ${\rm SO}(m)\times {\rm SO}(n-m)$-structure $P\subset {\rm SO}(M)$ is minimal if and only if the Gauss map $\sigma\colon (M,\tilde{g})\to (\operatorname{Gr}^{\rm o}_m(TM),\skal{\cdot}{\cdot})$ of $E$ is a harmonic map or, alternatively, if the image $E=\sigma(M)\subset \operatorname{Gr}^{\rm o}_m(TM)$ is minimal in the oriented Grassmann bundle $\operatorname{Gr}^{\rm o}_m(TM)$.

Let us now give two examples (compare~\cite{ggv,gd,hl}).

\begin{Example}
Let $(M,g,X_0)$ be a $K$-contact manifold with the Reeb vector f\/ield $X_0$, i.e., $X_0$~is a~unit Killing vector f\/ield and there exist one-form $\eta$ and endomorphism $\varphi$ such that
\begin{gather*}
\eta(X)=g(X,X_0),\qquad \varphi^2X=-X+\eta(X)X_0,\qquad d\eta(X,Y)=g(X,\varphi Y),\qquad \iota_{X_0}d\eta=0
\end{gather*}
for all $X,Y\in\Gamma(TM)$.

The one-dimensional distribution $E$ tangent to $X_0$ def\/ines the almost product structure on~$M$. Since $X_0$ is geodesic vector f\/ield, the distribution $E$ is totally geodesic. It can be shown, that $E\subset \operatorname{Gr}^{\rm o}_{n-1}(TM)$ is a minimal submanifold or, in other words, the immersion $\sigma\colon (M,\tilde{g})\to (\operatorname{Gr}^{\rm o}_{n-1}(TM),\skal{\cdot}{\cdot})$, $\sigma(x)=E_x$, $x\in M$, is minimal. Therefore, the ${\rm SO}(n-1)\times {\rm SO}(1)$-structure~$P$ induced by~$E$ is minimal in the orthonormal frame bundle~${\rm SO}(M)$.
\end{Example}

\begin{Example}
Consider the sphere $S^{4n-1}$ in the Euclidean space $\mathbb{R}^{4n}$ and let $I$, $J$, $K$ be the usual quaternionic structure on $\mathbb{R}^{4n}$. The $3$-dimensional subspaces spanned by $IN$, $JN$, $KN$, where $N$ is the unit outward vector f\/ield to $S^{4n-1}$, determine the Hopf distribution $E$ on $S^{4n-1}$. Since the Hopf distribution def\/ines the minimal immersion of $S^{4n-1}$ into the Grassmann bundle $\operatorname{Gr}^{\rm o}_3(TS^{4n-1})$, it follows that the ${\rm SO}(3)\times {\rm SO}(4n-4)$-structure $P\subset {\rm SO}(S^{4n-1})$ is minimal.
\end{Example}

\vspace{-2mm}

\pdfbookmark[1]{References}{ref}
\LastPageEnding

\end{document}